\newtheorem{theorem}            {Theorem}[section]
\newtheorem{corollary}          [theorem]{Corollary}
\newtheorem{proposition}        [theorem]{Proposition}
\newtheorem{definition}         [theorem]{Definition}
\newtheorem{lemma}              [theorem]{Lemma}
\newtheorem{example}            [theorem]{Example}
\newtheorem{remark}             [theorem]{Remark}
\def\Zp{\mathcal Z_p}
\def\tV{\tilde{\V}}
\def\tQ{\tilde Q}
\def\tP{\tilde P}
\def\tK{\tilde{K}}
\def\tS{\tilde{S}}
\def\tT{\tilde{T}}
\def\cK{\mathcal K}
\newcommand{\ts}[1]{\frac{1}{\overline{#1}}}
\def\V{\mathcal V}
\def\N{\mathcal N}
\def\zetastar{\overline{\zeta}}
\def\etastar{\overline{\eta}}
\def\lambdastar{\overline{\lambda}}
\def\mustar{\overline{\mu}}
\def\mQ{K_Q}
\def\cQ{\mathcal Q}
\def\tcQ{\tilde{\mathcal Q}}
\def\tmQ{\tilde{\mQ}}
\def\oz{\overline{z}}
\def\ow{\overline{w}}
\def\ozeta{\overline{\zeta}}
\def\oeta{\overline{\eta}}
\begin{document}
\title[Pick interpolation on distinguished varieties]{Nevanlinna-Pick interpolation on distinguished varieties in the bidisk}

\author[M.~T.~Jury, G.~Knese,  and
S.~McCullough]{Michael T. Jury$^1$, Greg Knese$^2$, 
  and Scott McCullough$^3$}

\address{Department of Mathematics\\
  University of Florida\\
  Box 118105\\
  Gainesville, FL 32611-8105\\
  USA}

\email{mjury@ufl.edu}

\address{Department of Mathematics\\
  University of Alabama\\
  Tuscaloosa, AL 35487-0350}

\email{geknese@bama.ua.edu}

\address{Department of Mathematics\\
  University of Florida\\
  Box 118105\\
  Gainesville, FL 32611-8105\\
  USA}

\email{sam@ufl.edu}

\subjclass[2000]{}

\keywords{}
 
\thanks{Thanks to BIRS for providing a useful meeting place. \\
  ${}^1$Research supported by NSF grants DMS 0701268 and DMS 1101461.
  \\ ${}^2$ Research supported by NSF grant DMS 1048775. Research
  initiated by first two authors at the Fields Institute.  Travel to
  UF supported in part by UF. \\ ${}^3$ Research supported by NSF
  grants DMS 0758306 and DMS 1101137.}

\date{\today}

\begin{abstract}
This article treats Nevanlinna-Pick interpolation in the setting of a special
class of algebraic curves called distinguished varieties.  An
interpolation theorem, along with additional operator theoretic
results, is given using a family of reproducing kernels
 naturally associated to the variety.  The 
 examples of the  Neil parabola and doubly connected domains
 are discussed. 
\end{abstract}

 \maketitle

\section{Introduction}
  \label{sec:intro}
 Versions of the Nevanlinna-Pick interpolation theorem stated in terms
 of the positivity of a family of Pick matrices have a long tradition
 beginning with Abrahamse's interpolation Theorem on multiply
 connected domains \cite{Ab}.  The list \cite{AD, Ag, AMpolydisc,
   AMnp, AMbook, AMint, B, BB, BBtH, BCV, BTV, CLW, DH, DPRS, DP, FF, H, MS,
   Paulsen, RIEOT, R2, Sarason} is just a sample of now classic
 papers and newer results in this direction related to the present
 paper.  Here we consider Pick interpolation on a distinguished
 variety.  For general facts about distinguished varieties we have
 borrowed heavily from \cite{AMS, AMvarieties, Kn1, Kn2}.

The classical Nevanlinna-Pick interpolation theorem says that, given
points $z_1, \dots z_n$ in the unit disk $\mathbb D\subset \mathbb C$
and points $\lambda_1, \dots \lambda_n$ in $\mathbb D$, there exists a
holomorphic function $f:\mathbb D\to \mathbb D$ with
$f(z_i)=\lambda_i$ for each $i$ if and only if the $n\times n$ {\em
  Pick matrix}
\begin{equation}\label{eqn:pick_classical}
\left(\frac{1-\lambda_i\overline{\lambda_j}}{1-z_i\overline{z_j}} \right)_{i,j=1}^n
\end{equation}
is positive semidefinite.  From the modern point of view (that is,
the point of view of ``function-theoretic operator theory''), one
interprets this condition as checking the positivity of
$(1-\lambda_i\overline{\lambda_j})$ against the {\em Szeg\H{o} kernel}
$k(z_i,z_j):=(1-z_i\overline{z_j})^{-1}$ on the interpolation nodes
$z_1, \dots z_n$.  The Szeg\H{o} kernel $k(z,w)$ is the reproducing
kernel for the Hardy space $H^2(\mathbb D)$, which is a Hilbert space
of holomorphic functions on $\mathbb D$.  Thus, this point of view
repackages the constraint $f:\mathbb D\to \mathbb D$ (that is, $|f|$
is bounded by $1$ in $\mathbb D$) as the condition that $f$ multiply
$H^2(\mathbb D)$ into itself contractively.  See \cite{AMbook} for an
extended exposition of this point of view.

One strength of this perspective is that gives a natural framework in
which to pose and solve interpolation problems on other domains (in
$\mathbb C$ or $\mathbb C^n$).  For example, Abrahamse \cite{Ab}
considered the analogous interpolation problem in a $g$-holed planar
domain $R$.  His theorem says, for a canonical family of reproducing
 kernels $k_t$ on $R$ naturally parametrized  by the
 $g$-torus $\mathbb T^g$,  that given $z_j$'s in $R$ and
$\lambda_j$'s in $\mathbb D$, there exists a holomorphic interpolating
function $f:R\to \mathbb D$ if and only if each of the family of Pick
matrices
\begin{equation}\label{eqn:abrahamse_family}
[(1-\lambda_i\overline{\lambda_j})k_t(z_i,z_j)]_{i,j=1}^n
\end{equation}
is positive semidefinite.  
More recently, Davidson, Paulsen,
Raghupathi and Singh \cite{DPRS} considered the original Pick problem
on the disk, but with the additional constraint that $f^\prime(0)=0$.
Again, positivity against a particular family of kernels is necessary
and sufficient.  A very general approach to interpolation via kernel
families may be found in \cite{JKM-kernels}.

On the other hand, {\em distinguished varieties} have recently emerged
as a new venue in which to investigate function-theoretic operator
theory \cite{AMS, AMvarieties, Kn1, Kn2}.  By one definition, a
distinguished variety is an algebraic variety $\mathcal Z\subset
\mathbb C^2$ with the property that if $(z,w)\in \mathcal Z$, then
$|z|$ and $|w|$ are either both less than $1$, both greater than $1$,
or both equal to $1$.  (We shall usually consider only the intersection
$\mathcal V=\mathcal Z\cap \mathbb D^2$, and by abuse of language
refer to this as a distinguished variety as well.)  The simplest
non-trivial example is the {\em Neil parabola} $\mathcal N=
\{(z,w):z^3=w^2\}$.  Just as von Neumann's inequality and the
Sz.-Nagy-Foias dilation theorem establish a connection between
contractive operators on Hilbert space and function theory in the unit
disk $\mathbb D$, the function theory on a distinguished variety is
linked to the study of pairs of commuting contractions $S,T$ on
Hilbert space which obey a polynomial relation $p(S,T)=0$.

The purpose of this paper is to prove a Pick interpolation theorem for
bounded analytic functions on distinguished varieties.  The main
theorem identifies a canonical collection of kernels over the variety.
Each kernel corresponds to commuting pair of isometries with finite
rank defect and Taylor spectrum in the closure of the variety.  It
turns out, in fact, that two of the examples mentioned above
(constrained interpolation in the disk, and interpolation on multiply
connected domains) can be recast as interpolation problems on
distinguished varieties.  In addition to the interpolation theorem,
this article contains information about such pairs of isometries,
including a geometric picture of the (minimal) unitary extension with
spectrum in the intersection of the boundary of the bidisk and the
closure of the distinguished variety.  We also prove a polynomial
approximation theorem for bounded analytic functions on varieties.
The remainder of this introductory section provides some background
material and states the main results more fully.

\subsection{Distinguished Varieties}
  A subset  $\V$ of $\mathbb D^2$ is a {\em distinguished variety} if  
  there exists a square free polynomial $p \in \mathbb C[z,w]$ such that 
\begin{equation*}
  \V= \mathcal Z_p \cap \mathbb D^2
\end{equation*}
and
\begin{equation*}
\mathcal Z_p \subset \mathbb D^2 \cup \mathbb T^2 \cup \mathbb E^2.
\end{equation*}
Here $\mathcal Z_p$ is the zero set of $p$;
\begin{equation*}
 \mathbb D^2=\{(z,w)\in\mathbb C: |z|,\ |w| <1\}
\end{equation*}
is the bidisk;   $\mathbb T^2=\{(z,w)\in\mathbb C^2: |z|=1=|w|\}$
is the distinguished boundary of the bidisk; and 
\begin{equation*}
  \mathbb E^2=\{(z,w)\in\mathbb C^2: |z|,\ |w|>1\}
\end{equation*} 
denotes the {\it exterior bidisk}.  An alternate, but equivalent,
definition of distinguished variety is an algebraic set in the bidisk
that exits through the distinguished boundary $\mathbb{T}^2=(\partial
\mathbb{D})^2$ (see \cite{AMvarieties}, \cite{Kn1}).

The polynomial $p$ can be chosen to have the symmetry
\begin{equation}
 \label{eq:Zp-symmetry}
   p(z,w)=z^n w^m \overline{p(\frac{1}{\oz},\frac{1}{\ow})}
\end{equation}
 and hence $\mathcal Z_p$ is invariant under the 
 map $(z,w)\to (\frac{1}{\oz},\frac{1}{\ow})$ \cite{Kn1}.
  
 Write $(n,m)$ for the bidegree of $p$; i.e. $p$ has degree $n$ in $z$
 and $m$ in $w$.  By a fundamental result of Agler and McCarthy
 \cite{AMvarieties}, $\V$ admits a determinantal representation
\begin{equation}
 \label{V-Phi}
\V =\{(z, w)\in\mathbb D^2 : \det (wI_m -\Phi(z))=0\}
\end{equation}
where $\Phi$ is an $m\times m$ rational matrix function which is 
analytic on the closed disk $\overline{\mathbb D}$ and unitary 
on $\partial \mathbb D$. 

 Given such a $\Phi$ there 
exist (row) vector-valued polynomials
\begin{equation*}
 Q(z, w) =(q_1(z,w) \dots q_m(z,w)), \quad P(z, w) 
   =(p_1(z,w) \dots p_n(z,w)), 
\end{equation*}
 such that
\begin{equation}\label{eqn:Phi_eigen}
 \Phi(z)^*Q(z,w)^* =\overline w Q(z,w)^* 
\end{equation}
and
\begin{equation}\label{E:pq}
(1-w\overline{\eta}) Q(z, w) Q(\zeta, \eta)^* =(1-z\overline{\zeta}) 
  P(z, w) P(\zeta,\eta)^* 
\end{equation}
for all $(z,w)$ and $(\zeta, \eta)$ in $\V$.  In fact, such
polynomials can be chosen such that $Q$ has degree at most $m-1$ in
$w$ and $P$ has degree at most $n-1$ in $z$ \cite{Kn1}.  Moreover,
every pair $P,Q$ of polynomial vector functions satisfying
(\ref{E:pq}) on $\V$ arises in this way; i.e., there is a rational
$\Phi$ such that \eqref{V-Phi} and \eqref{eqn:Phi_eigen} hold.  This
last assertion is a consequence of Lemma \ref{lem:PhiQP} below.


A pair $P$ and $Q$ satisfying \eqref{E:pq} 
 determines the positive definite kernel
 $K:\V\times \V\to \mathbb C$, 
\begin{equation*}
  K(z,w)=\frac{Q(z,w)Q(\zeta,\eta)^*}{1-z\overline{\zeta}}
     = \frac{P(z,w)P(\zeta,\eta)^*}{1-w\overline{\eta}}
\end{equation*}
 on $\V\times \V$. 

 It is natural to generalize the construction of the
 kernel $K$ , allowing for matrix-valued $P$ and $Q$. Let $M_{\alpha,\beta}$ denote 
 the set of $\alpha\times \beta$ matrices with entries from $\mathbb C$.

\begin{definition}\rm
 \label{def:admissablePQ}
  A {\it rank $\alpha$ admissible pair},
  synonymously a {\it $\alpha$-admissible pair}, 
  is a pair $(P,Q)$ 
  of matrix polynomials $P,Q$ in two variables 
  such that,
 \begin{itemize}
  \item[(i)] $Q(z,w)$ is  $M_{\alpha,m\alpha}$-valued
    and $P(z,w)$ is  $M_{\alpha,n\alpha}$-valued; 
  \item[(ii)] both $Q(z,w)$ and $P(z,w)$
      have rank $\alpha$ (that is, full rank) at some point of each irreducible
      component of $\mathcal Z_p$;  and
  \item[(iii)] for $(z,w),(\zeta,\eta)\in \Zp$, 
   \begin{equation}\label{eqn:admissible_pair_def}
     \frac{Q(z, w) Q(\zeta,\eta)^*}{(1-z\overline{\zeta})}=
       \frac{P(z, w) P(\zeta,\eta)^*}{(1-w\overline{\eta})}.
   \end{equation} 
 \end{itemize}
\noindent   A pair $(P,Q)$ is {\it admissible} if it is $\alpha$-admissible
   for some $\alpha$.
\end{definition}

\begin{remark}\rm
Though both sides of (\ref{eqn:admissible_pair_def}) define
meromorphic functions on $\mathbb C^2\times \mathbb C^2$, we emphasize
that the equality is assumed to hold only on $\Zp\times \Zp$.
\end{remark}

  Let $\tV$ denote the intersection of $\Zp$
  with the exterior bidisk $\mathbb E^2$.  
  In view of equation \eqref{eq:Zp-symmetry}, 
  $\tV=\{(\frac{1}{\oz},\frac{1}{\ow}):(z,w)\in \V\}$.

\begin{definition}\rm
  \label{def:Ks}
  A rank $\alpha$ admissible pair $(P,Q)$ determines (positive semidefinite)
  kernels $K:\V\times \V\to M_\alpha$ and $\tK:\tV\times \tV\to M_\alpha$,
 \begin{equation*}
  \begin{split}
    K((z,w),(\zeta,\eta)) = &\frac{Q(z,w)Q(\zeta,\eta)^*}{1-z\overline{\zeta}}
                          = \frac{P(z,w)P(\zeta,\eta)^*}{1-w\overline{\eta}} \\
    \tK((z,w),(\zeta,\eta)) = &\frac{Q(z,w)Q(\zeta,\eta)^*}{z\overline{\zeta}-1}
                             = \frac{P(z,w)P(\zeta,\eta)^*}{w\overline{\eta}-1},
  \end{split}
 \end{equation*}
   which we call an {\it admissible pair} of kernels.

\begin{remark} \rm
 \label{rem:tK}
  The kernel  $\tK$ can also be written as
 \begin{equation*}
   \tK((z,w),(\zeta,\eta)) = \frac{1}{z\overline{\zeta}} 
      \frac{Q(z,w) Q(\zeta,\eta)^*}{1-\frac{1}{z}\frac{1}{\overline{\zeta}}}
       = \frac{1}{w\overline{\eta}} \frac{P(z,w)P(\zeta,\eta)^*}
              {1-\frac{1}{w}\frac{1}{\overline{\eta}}}.
 \end{equation*}
\end{remark}
   
   The corresponding reproducing Hilbert spaces are denoted $H^2(K)$
   and $H^2(\tK)$.  The operators $S=M_z$ and $T=M_w$ of
   multiplication by $z$ and $w$ respectively are contractions on
   $H^2(K)$.  Likewise the operators $\tS$ and $\tT$ of multiplication
   by $\frac{1}{z}$ and $\frac{1}{w}$ respectively are contractions on
   $H^2(\tK)$.  We will see later that they are in fact isometric.
\end{definition}

These pairs of operators $(S,T)$ play the role of {\it bundle shifts}
\cite{AD,Sarason} over $\V$, terminology which is explained by
Theorems \ref{thm:isometries} and Theorem
\ref{thm:dilation-with-relations} below.  In addition to these
theorems, a main result of this paper is a version of Pick
interpolation for $\V$ - Theorem \ref{thm:pick-final}.  In the next
subsection we describe these results more fully.


\subsection{Main Results}
 \label{subsec:main}
The main result of this paper is Theorem~\ref{thm:pick-final}, the Pick interpolation theorem on distinguished varieties.  It is based on three subsidiary results, each of some interest in its own right.  
\begin{definition}
  Let $\V$ be a variety.  A function $f:\V\to \mathbb C$ is {\em holomorphic} at a point $(z,w)\in \V$ if there exists an open set $U\subset \mathbb C^2$ containing $(z,w)$ and a holomorphic function $F:U\to\mathbb C$ such that $F$ agrees with $f$ on $\V\cap U$.  
\end{definition}

If $(z,w)$ is a smooth point of $\mathcal V$, then the (holomorphic) implicit function theorem tells us there is a local coordinate in a neighborhood $O$ of $(z,w)$ in $\mathcal V$ making this neighborhood into a Riemann surface; a function is holomorphic at $(z,w)$ by the above definition if and only if it is holomorphic as a function of the local coordinate.  The importance of the definition, therefore, is that it allows us to make sense of holomorphicity near singular points of $\mathcal V$.  See \cite[Chapter 3]{TaylorSCV}.

We let $H^\infty(\V)$ denote the set of functions that are bounded on $\V$ and holomorphic at every point of $\V$.  It is a Banach algebra under the supremum norm 
\begin{equation}\label{eqn:sup-norm-def}
  \|f\|_\infty :=\sup_{(z,w)\in\V}|f(z,w)|.
\end{equation}
(That $H^\infty(\V)$ is complete follows from the non-trivial fact that a locally uniform limit of functions holomorphic on $\V$ is holomorphic on $\V$, see \cite{TaylorSCV} Theorem 11.2.5.)

\begin{theorem}\label{thm:pick-final}
   Let $(z_1,w_1),\dots,(z_n,w_n)\in \V$ and 
  $\lambda_1,\dots,\lambda_n \in\mathbb D$ be given.
  There exists an $f\in H^\infty(\V)$ such that $\|f\|_\infty\leq 1$ and
 \begin{equation*}
   f(z_j,w_j)=\lambda_j \text{ for each $j=1, \dots n$}
 \end{equation*}
  if and only if
 \begin{equation*}
  (1-\lambda_\ell\overline{\lambda_j}) K((z_j,w_j),(\zeta_\ell,\eta_\ell))
 \end{equation*}
is positive semi-definite for every admissible kernel $K$.
\end{theorem}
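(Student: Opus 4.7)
The argument splits in the standard way: necessity is routine from the multiplier/reproducing-kernel formalism, while sufficiency requires a Hahn--Banach duality coupled with the structural description of admissible pairs.

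\emph{Necessity.} Fix an admissible kernel $K$ with associated reproducing kernel Hilbert space $H^2(K)$. By Theorem \ref{thm:isometries}, the coordinate multipliers $S = M_z$ and $T = M_w$ on $H^2(K)$ are commuting isometries with joint spectrum in $\overline{\V}$ admitting a minimal unitary extension with spectrum on $\partial \mathbb{D}^2 \cap \overline{\V} \subset \mathbb{T}^2$. The functional calculus provided by that extension, together with the polynomial approximation theorem announced in the introduction, yields $\|p(S,T)\| \leq \|p\|_\V$ for polynomials $p$ and hence lets us define a contraction $M_f$ on $H^2(K)$ for every $f \in H^\infty(\V)$ with $\|f\|_\infty \leq 1$. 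Since $M_f^*$ acts on each (matrix-valued) kernel vector $K_{(z_j, w_j)}\xi$ by $\overline{f(z_j,w_j)}\,K_{(z_j, w_j)}\xi = \overline{\lambda_j}\,K_{(z_j, w_j)}\xi$, pairing $(I - M_f M_f^*) \geq 0$ against sums $\sum_j K_{(z_j,w_j)}\xi_j$ reproduces the Pick matrix as a Gram matrix, giving positivity.

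\emph{Sufficiency.} Let
\begin{equation*}
  \mathcal F = \{(f(z_1,w_1),\dots,f(z_n,w_n)) : f \in H^\infty(\V),\ \|f\|_\infty \leq 1\} \subset \overline{\mathbb{D}}^n.
\end{equation*}
The set $\mathcal F$ is convex; closedness comes from a Montel/normal-family argument on the smooth locus of $\V$ together with removable-singularities at the finite singular set. Suppose for contradiction that the tuple $\lambda = (\lambda_j)$ satisfies the hypothesized Pick positivity against every admissible kernel but $\lambda \notin \mathcal F$. Hahn--Banach separation in $\mathbb{C}^n$ produces complex scalars $c_1,\dots,c_n$ witnessing strict separation. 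I then plan to promote this separating functional into a concrete admissible pair $(P,Q)$ whose kernel $K$ fails Pick positivity at $\lambda$, contradicting the hypothesis. Concretely, extract from the functional a positive measure on $\partial \V \cap \mathbb{T}^2$ using the unitary extension of $(S,T)$; apply a Szeg\H{o}-type vector-valued outer factorization along the boundary of $\V$ to manufacture matrix polynomials $P,Q$ satisfying the admissibility identity \eqref{eqn:admissible_pair_def}; and verify that the corresponding kernel $K$ recovers, up to sign, precisely the separating functional evaluated at $\lambda$.

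\textbf{Main obstacle.} The delicate step is the passage from a Hahn--Banach separator to an \emph{admissible} pair $(P,Q)$ satisfying the precise polynomial structure of Definition \ref{def:admissablePQ}. The Agler--McCarthy determinantal representation \eqref{V-Phi}, the isometric structure of $(S,T)$ afforded by Theorem \ref{thm:isometries}, and the constructive existence of vector polynomials satisfying \eqref{eqn:Phi_eigen}--\eqref{E:pq} provide the raw materials, but controlling the bidegree bounds ($m-1$ in $w$ for $Q$ and $n-1$ in $z$ for $P$), matching ranks against full-rank requirement (ii) of Definition \ref{def:admissablePQ} on every irreducible component, and correctly handling singular points of $\V$ together form the main technical burden. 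Once that conversion is achieved, the sufficiency direction closes immediately against the assumed positivity.
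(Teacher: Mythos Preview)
Your necessity argument is essentially correct and matches the paper's in spirit, though you are doing more work than needed: once $H^\infty_{\mathcal K}(\V)=H^\infty(\V)$ isometrically (Corollary~\ref{cor:isometry-of-algebras}), the kernel $(1-f\overline f)K\succeq 0$ follows directly from the definition of $\|\cdot\|_{\V}$, with no dilation or functional calculus required.

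The sufficiency direction, however, has a genuine gap, and it is precisely the step you yourself label the ``main obstacle.'' You propose to take a Hahn--Banach separator, promote it to a measure on $\partial\V$, and then outer-factor to produce an admissible pair $(P,Q)$ violating Pick positivity. None of this is carried out, and it is not clear that it can be: the Szeg\H{o}-type factorization you invoke would have to produce \emph{matrix polynomials} $P,Q$ of the prescribed sizes satisfying the two-sided identity \eqref{eqn:admissible_pair_def} on all of $\Zp$, together with the full-rank condition on every irreducible component. There is no general mechanism for extracting such a structured object from an arbitrary separating measure on $\partial\V$; the determinantal representation \eqref{V-Phi} runs in the opposite direction (from $(P,Q)$ to $\Phi$), and Lemma~\ref{lem:PhiQP} does not manufacture admissible pairs from measures.

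The paper avoids this difficulty entirely by taking a different route. It introduces the auxiliary algebra $H^\infty_{\mathcal K}(\V)$ and first proves the interpolation theorem there (Theorem~\ref{thm:interpolate}) by showing that the admissible kernels form an \emph{Agler interpolation family} in the sense of Definition~\ref{def:kernel-family}, and then invoking the abstract Theorem~\ref{thm:main} from \cite{JKM-kernels}. The substantive work is verifying condition~(ii) of that definition, \emph{compression stability}: given an admissible $K$, a point $u\in\V$, and a vector $\gamma$, one must show that the compressed kernel $K'$ is of the form $G\kappa G^*$ for some admissible $\kappa$. This is done by an explicit M\"obius-twist construction (Section~\ref{sec:kernel-structures}) producing new polynomials $Q',P'$ from $Q,P$. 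That construction is the true replacement for your Hahn--Banach-to-kernel conversion, and it works because it stays inside the class of admissible pairs from the start rather than trying to enter it from a measure. Theorem~\ref{thm:pick-final} then follows by combining Theorem~\ref{thm:interpolate} with the isometric identification $H^\infty_{\mathcal K}(\V)=H^\infty(\V)$, which in turn rests on the dilation Theorem~\ref{thm:dilation-with-relations} and the polynomial approximation Theorem~\ref{thm:poly-approx}.
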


The proof of this theorem splits into two parts.  We first introduce
an auxiliary algebra $H^\infty_{\cK}(\V)$ of bounded analytic functions
on $\V$ (but equipped with a norm $\|\cdot \|_{\V}$ that is defined
differently than the supremum norm) and prove that the condition of
Theorem~\ref{thm:pick-final} is necessary and sufficient for
interpolation in this algebra.  The second part of the proof consists
in showing that in fact $H^\infty_{\cK}(\V) =H^\infty(\V)$
isometrically ({\em a priori} it is obvious only that
$H^\infty_{\cK}(\V)\subseteq H^\infty(\V)$ contractively).  This second
part in turn splits in two: we first prove a unitary dilation theorem
for algebraic pairs of isometries; as a corollary we find that
$H^\infty_{\cK}$ contains all polynomials $q$, and
$\|q\|_{\V}=\|q\|_\infty$.  Finally we prove a polynomial
approximation result for $H^\infty(\V)$ (Theorem~\ref{thm:poly-approx}) which allows us to extend this
isometry to all of $H^\infty(\V)$.


\subsubsection{Interpolation in $H^\infty_{\mathcal K}(\V)$}
 \label{subsubsec:H-infty}
    We now define the auxiliary algebra $H^\infty_{\mathcal K}(\V)$ in which we will interpolate.
    Let $W$ be a set,  $n$ a positive integer, and let $M_n$
    denote the $n\times n$ matrices with entries from $\mathbb C$. 
     A function $f:W\times W\to M_n$ is positive semi-definite
    if, for each finite subset $F\subset W$, the $n|F|\times n|F|$ matrix
  \[
    \begin{pmatrix} f(u,v)\end{pmatrix}_{u,v\in F}
  \]
    is positive semi-definite; we write $f(u,v)\succeq 0$.
\begin{definition}\rm
   \label{defn:schur_class}
  Say a function $f:\V\to\mathbb C$ belongs to $H^\infty_{\mathcal K}(\V)$ if there
  exists a real number $M>0$ such that 
 \begin{equation}\label{eqn:H_infty_defn}
    (M^2-f(z,w)f(\zeta,\eta))K((z,w),(\zeta,\eta)) \succeq 0
 \end{equation}
  for all admissible kernels $K$.  The norm $\|f\|_\V$ is defined to
  be the infimum of all $M$ such that (\ref{eqn:H_infty_defn}) holds
  for all admissible $K$.
\end{definition}
In other words, $\|f\|_{\V}\leq M$ if and only if for each admissible
$K$, the operator $M_f$ of multiplication by $f$ is bounded on
$H^2(K)$, with operator norm at most $M$; and thus
\begin{equation}\label{eqn:V_norm_is_mult_norm}
\|f\|_{\V}=\sup_K {\|M_f\|}_{B(H^2(K))}.
\end{equation}

  We are finally ready to state the interpolation theorem.
\begin{theorem}
 \label{thm:interpolate}
   Let $(z_1,w_1),\dots,(z_n,w_n)\in \V$ and 
  $\lambda_1,\dots,\lambda_n \in\mathbb D$ be given.
  There exists an $f\in H^\infty_{\mathcal K}(\V)$ such that $\|f\|_\V\leq 1$ and
 \begin{equation*}
   f(z_j,w_j)=\lambda_j \text{ for each $j=1, \dots n$}
 \end{equation*}
  if and only if
 \begin{equation*}
  (1-\lambda_\ell\overline{\lambda_j}) K((z_j,w_j),(\zeta_\ell,\eta_\ell))
     \succeq 0
 \end{equation*}
  for every admissible kernel $K$.
\end{theorem}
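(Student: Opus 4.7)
The necessity direction is immediate from Definition~\ref{defn:schur_class}: any $f \in H^\infty_{\mathcal K}(\V)$ with $\|f\|_\V \le 1$ satisfies $(1 - f(z,w)\overline{f(\zeta,\eta)}) K((z,w),(\zeta,\eta)) \succeq 0$ on $\V \times \V$ for every admissible $K$, so restricting to the node pairs and substituting $f(z_j,w_j)=\lambda_j$ gives exactly the Pick condition. For sufficiency, the plan is to adapt the Agler--McCullough proof-template for abstract families of kernels (cf.~\cite{JKM-kernels}): first convert the Pick matrix positivity into a ``lurking isometry'' on each admissible kernel separately, and then use a convexity/separation argument to assemble these into a single interpolant valid against every admissible $K$ simultaneously.

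In more detail, fix the nodes $(z_j,w_j)$ and targets $\lambda_j$. For a rank-$\alpha$ admissible pair $(P,Q)$ with associated kernel $K$, form $\mathbf{K} = [K((z_j,w_j),(z_\ell,w_\ell))]_{j,\ell}$ and $\Lambda = \operatorname{diag}(\lambda_1 I_\alpha,\dots,\lambda_n I_\alpha)$; the hypothesis reads $\mathbf{K} - \Lambda \mathbf{K}\Lambda^* \succeq 0$. A Douglas factorization then yields a contraction whose adjoint intertwines the diagonal action of the $\lambda_j$'s with the restriction of $M_z^*$ to the finite-dimensional span of the reproducing kernels $K(\cdot,(z_j,w_j))c$ in $H^2(K)$. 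A standard Arveson-type extension or commutant-lifting step extends this to a contractive multiplier on $H^2(K)$, producing an interpolant in the ``$K$-Schur class'' for this single $K$. To combine these into one function, I would run a Hahn--Banach / cone-separation argument: the set of tuples $(\mu_1,\dots,\mu_n)\in\mathbb{C}^n$ interpolable by some $f$ with $\|f\|_\V \le 1$ is convex and closed in $\mathbb{C}^n$, so if a tuple $(\lambda_j)$ satisfying the Pick positivity were not interpolable, a separating self-adjoint functional would, via the identity $\|f\|_\V = \sup_K \|M_f\|_{B(H^2(K))}$, produce an admissible $K$ against which the Pick positivity fails, contradicting the hypothesis. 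An equivalent concrete route uses a Montel / normal-families compactness argument on $H^\infty(\V)$, relying on the fact (quoted in the paragraph before Theorem~\ref{thm:pick-final}) that locally uniform limits of functions holomorphic on $\V$ remain holomorphic, to extract a subsequential limit of interpolants produced against finite subfamilies of admissible kernels.

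The principal obstacle is precisely this passage from finite to infinite families of admissible kernels: the one-at-a-time lurking isometry is essentially classical, but organizing the resulting local interpolants into a single $f$ satisfying the positivity condition of Definition~\ref{defn:schur_class} against every admissible $K$ is where the real work lies, and the argument must be phrased so that the closedness of the interpolable set does not depend on restricting to only countably many $K$'s. A secondary subtlety is that admissible pairs come in arbitrary matrix rank $\alpha$, so both the lurking isometry construction and the separation argument must be carried out at the level of $M_\alpha$-valued positivity rather than scalar positivity, and one must verify that the supremum defining $\|f\|_\V$ is genuinely attained or approximated by admissible $K$'s (which is built into Definition~\ref{def:admissablePQ}).
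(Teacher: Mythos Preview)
Your necessity argument is fine. The sufficiency sketch has a genuine gap at precisely the step you flag as ``where the real work lies.''

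The paper's proof does not construct single-$K$ interpolants and then glue them. Instead it invokes the abstract interpolation theorem of \cite{JKM-kernels} (Theorem~\ref{thm:main}) after verifying that the family $\mathcal A$ of admissible kernels is an \emph{Agler interpolation family} in the sense of Definition~\ref{def:kernel-family}. Conditions (i), (iii), (iv) are easy. The substantive step is the \emph{compression stability} condition (ii): given an admissible $K$, a point $u=(x,y)\in\V$, and a vector $\gamma$, one must show that the compressed kernel
\[
K' = K - \frac{K(\cdot,u)\gamma\gamma^* K(u,\cdot)}{\gamma^* K(u,u)\gamma}
\]
is again of the form $G\kappa G^*$ for some \emph{admissible} $\kappa$. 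The paper does this by an explicit calculation: with $\delta = Q(x,y)^*\gamma$ one introduces the matrix Blaschke factor $B(z) = P_\delta^\perp + \varphi_x(z)P_\delta$, sets $Q'(z,w) = c\,(1-w\overline y)(1-z\overline x)\,Q(z,w)B(z)$ (and analogously $P'$), and checks that $(P',Q')$ is again an admissible pair. Lemma~\ref{lem:full_rank_ae} is needed to confirm the full-rank condition survives. This closure-under-compression is exactly what drives the induction on nodes hidden inside Theorem~\ref{thm:main}; nothing in your outline plays that role.

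Your two proposed substitutes both fail. In the Hahn--Banach route, the separating positive functional would give \emph{some} positive kernel on the nodes, but there is no reason it is an \emph{admissible} kernel (i.e.\ arises from an admissible pair $(P,Q)$); without compression stability you cannot identify the closed convex hull of Pick-positive data with the interpolable set for the admissible family. The Montel alternative is circular: to know that interpolants against finite subfamilies of admissible kernels are uniformly bounded holomorphic functions on $\V$, and that a locally uniform limit lands back in the unit ball of $H^\infty_{\mathcal K}(\V)$, you would need $\|\cdot\|_\infty = \|\cdot\|_\V$---but that is Corollary~\ref{cor:isometry-of-algebras}, which the paper proves \emph{after} Theorem~\ref{thm:interpolate} via the dilation theorem and polynomial approximation. (There is also a secondary issue: your single-$K$ lurking isometry yields a contractive multiplier of $H^2(K)$, which for a rank-$\alpha$ kernel is \emph{a priori} $M_\alpha$-valued, not scalar.)
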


 Theorem \ref{thm:interpolate} is proved in Section
 \ref{sec:kernel-structures}.

 The problem of extending a function defined on
 $\V$ to all of the bidisk is treated in
 \cite{AMint}.

\subsubsection{Commuting isometries with spectrum in $\overline{\V}$}

\begin{theorem}
 \label{thm:isometries}
Let $K$ be an admissible kernel and write 
\begin{equation*}
  S=M_z, \quad T=M_w
\end{equation*} 
for the coordinate multiplication operators on $H^2(K)$.  Then:
\begin{itemize}
\item[(i)] $S$ and $T$ are pure commuting isometries,
\item[(ii)] $p(S, T)=0$, and
\item[(iii)] the Taylor spectrum of $(S, T)$ is 
       contained in the closure of $\V$ 
       in $\overline{\mathbb D^2}$. 
\end{itemize}
\end{theorem}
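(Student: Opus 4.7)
My plan is to dispose of parts (ii) and (iii) quickly by leveraging part (i), and to concentrate the work on part (i).

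\textbf{Parts (ii) and (iii).} Part (ii) is an identity of functions: since $p\equiv 0$ on $\V$ and elements of $H^2(K)$ are $\mathbb C^\alpha$-valued functions on $\V$, for every $f\in H^2(K)$ and every $(z,w)\in\V$ we have $(p(S,T)f)(z,w)=p(z,w)f(z,w)=0$, giving $p(S,T)=0$. For (iii), once (i) presents $(S,T)$ as a commuting pair of contractions, the projection property of the Taylor joint spectrum places $\sigma_T(S,T)$ in $\overline{\mathbb D}^2$; combining this with $p(S,T)=0$ and the polynomial spectral mapping theorem yields $\sigma_T(S,T)\subseteq\mathcal Z_p\cap\overline{\mathbb D}^2$. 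The distinguished-variety hypothesis $\mathcal Z_p\subset\mathbb D^2\cup\mathbb T^2\cup\mathbb E^2$ collapses the right-hand side to the closure $\overline{\V}$ of $\V$ in $\overline{\mathbb D}^2$.

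\textbf{Part (i), straightforward half.} Commutativity $ST=TS$ is $zw=wz$. The contractivity of $S$ and $T$ falls out of the admissibility identities via the standard formula $M_\phi^* k_y v=\overline{\phi(y)}\,k_y v$: the operator inequality $I-SS^*\succeq 0$ on $H^2(K)$ is equivalent to the kernel inequality $(1-z\overline{\zeta})K\succeq 0$, which holds because $(1-z\overline{\zeta})K = QQ^*$; symmetrically $(1-w\overline{\eta})K = PP^*$ gives $I-TT^*\succeq 0$.

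\textbf{Part (i), the content.} The isometry and purity of $S$ and $T$ are where the work lies. My plan is to isometrically embed $H^2(K)$ into a vector-valued Hardy space $H^2_{\mathbb C^N}(\mathbb D)$ in a way that intertwines $S$ with the coordinate shift $M_z$ and $T$ with the multiplication operator $M_\Phi$ coming from the rational inner matrix function $\Phi$ in the determinantal representation~\eqref{V-Phi}. The eigenrelation $\Phi(z)^* Q(z,w)^* = \overline{w}\,Q(z,w)^*$ motivates the construction: pairing a vector-valued function on $\mathbb D$ with $Q(z,w)^*$ should convert $M_\Phi$ into multiplication by $w$ along $\V$. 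Since $\Phi$ is unitary on $\partial\mathbb D$, both $M_z$ and $M_\Phi$ are pure commuting isometries on the ambient vector Hardy space, and the restriction of a pure isometry to a jointly invariant subspace is again a pure isometry, so $(S,T)$ inherit both properties. The chief obstacle is showing that this embedding is genuinely isometric (not merely contractive) while simultaneously respecting both $S$ and $T$; this is exactly where the full strength of admissibility --- the equality $QQ^*/(1-z\overline{\zeta}) = PP^*/(1-w\overline{\eta})$ --- is essential, and where Lemma~\ref{lem:PhiQP} should be invoked to couple $P$, $Q$, and $\Phi$ tightly enough to handle multiplicities and singular points of $\V$.
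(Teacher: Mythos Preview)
Your treatment of (ii), (iii), and the contractivity/commutativity half of (i) matches the paper. For the substantive half of (i) you take a genuinely different route. The paper works \emph{intrinsically} in $H^2(K)$: it proves (Lemma~\ref{lem:QQ-star}) that the finite-dimensional space $\cQ=\operatorname{span}\{Q(\cdot)Q(\zeta,\eta)^*\gamma\}$ carries the kernel $\mQ=QQ^*$ when given the $H^2(K)$ inner product, and that $K-P_{\cQ}K=\overline{\zeta}\,SK$; from this the isometry of $S$ is read off directly, and purity follows from $S^{*j}K_{(\zeta,\eta)}\gamma=\overline{\zeta}^{\,j}K_{(\zeta,\eta)}\gamma\to 0$ on a dense set. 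The key technical input is Lemma~\ref{lem:Qinvertible} (built on Lemma~\ref{lem:PhiQP}), which gives generic $\lambda$ at which $(Q(\lambda,\mu_j)^*)_j$ is invertible. Your model-theoretic approach---send $K_{(\zeta,\eta)}\gamma\mapsto k_\zeta\otimes Q(\zeta,\eta)^*\gamma\in H^2\otimes\mathbb C^{m\alpha}$ and identify $(S,T)$ with $(M_z,M_\Phi)$---is legitimate and, in fact, is exactly the model the paper invokes \emph{after} proving Theorem~\ref{thm:isometries} (see the first paragraph of \S\ref{subsec:dilation-with-relations}). So you are proposing to front-load the model to \emph{prove} the theorem rather than use it as a consequence.

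One correction to your diagnosis of the obstacle: the map $V$ is automatically isometric, since $\langle k_\zeta\otimes Q(\zeta,\eta)^*\gamma,\,k_{\zeta'}\otimes Q(\zeta',\eta')^*\gamma'\rangle$ equals $K((\zeta',\eta'),(\zeta,\eta))$ by the very definition of $K$. What is \emph{not} automatic is that the range of $V$ is $M_z$-invariant (equivalently, that $V$ is onto); from $M_z^*V=VS^*$ you only get $S=V^*M_zV$, which yields contractivity but not isometry unless $M_z$ preserves $\operatorname{ran}V$. Surjectivity of $V$ is precisely where Lemma~\ref{lem:Qinvertible} enters: for generic $\zeta$ the vectors $Q(\zeta,\mu_j)^*\gamma$ span $\mathbb C^{m\alpha}$, so $k_\zeta\otimes v\in\operatorname{ran}V$ for all $v$, and density finishes it. A second small point: your assertion that $M_\Phi$ is a \emph{pure} isometry needs a word of justification (one must rule out a constant unitary summand of $\Phi$); the cleanest fix is to run the symmetric argument with $P$ in place of $Q$ to handle $T$, exactly as the paper does. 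With these two clarifications your route goes through, and it has the virtue of delivering the Hardy-space model of $(S,T)$ in one stroke.
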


Theorem \ref{thm:isometries}, with additional detail, is proved in
Section \ref{sec:shifts}.  See \cite{AKM} for
more on pairs $(S,T)$ satisfying (i) and (ii) above for some
polynomial $p$.

\subsubsection{Dilating commuting isometries with spectrum in $\overline{\V}$}

\begin{theorem}
 \label{thm:dilation-with-relations}
  Let $K$ be an admissible kernel.  
  Then the isometries $(S, T)$ of the previous theorem admit a 
 commuting unitary extension $(X, Y)$ such that $p(X, Y)=0$ 
 and the joint spectral measure for $(X, Y)$ lies in $\partial \V$.   

   In fact, 
 \begin{equation*}
  \begin{split}
    X=& \begin{pmatrix} S & \Sigma\\ 0 & \tS^*\end{pmatrix} \\
    Y=& \begin{pmatrix} T & \Gamma\\ 0 & \tT^* \end{pmatrix},
  \end{split}  
 \end{equation*}
   for a canonical pair of operators $\Sigma,\Gamma :H^2(\tK)\to H^2(K)$
  and where $S,T,\tilde{S},\tilde{T}$ are defined immediately after
  Remark \ref{rem:tK}.
\end{theorem}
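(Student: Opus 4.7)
The plan is to verify that the explicit $2\times 2$ block operators exhibited in the statement form a commuting unitary extension with the claimed spectral property. Write $X$ and $Y$ as in the statement, acting on $\mathcal H := H^2(K)\oplus H^2(\tK)$. A direct block computation shows that $X^*X = XX^* = I$ is equivalent to
\[
\Sigma\Sigma^* = I - SS^*,\quad \Sigma^*\Sigma = I - \tS\tS^*,\quad S^*\Sigma = 0,\quad \Sigma\tS = 0,
\]
so $\Sigma$ must be a partial isometry from $H^2(\tK)$ to $H^2(K)$ that is unitary from $\ker \tS^*$ onto $\ker S^*$ and zero on the orthogonal complement. The corresponding identities for $Y$ force the analogous structure on $\Gamma$, as a partial isometry $\ker \tT^*\to \ker T^*$.

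The canonical construction of $\Sigma$ goes through the defect spaces. A reproducing-kernel calculation in $H^2(K)$ yields
\[
(I - SS^*)K_{(\zeta,\eta)} v = Q(z,w) Q(\zeta,\eta)^* v,
\]
exhibiting $\ker S^*$ as a finite-dimensional space with natural $Q$-data. The analogous computation in $H^2(\tK)$, using Remark \ref{rem:tK}, realizes $\ker \tS^*$ via the same polynomial $Q$ (with only a different weighting coming from the $z\overline{\zeta}-1$ denominator). The canonical matching of these two $Q$-realizations defines $\Sigma$; construct $\Gamma$ analogously from the $P$-side. The admissibility identity \eqref{eqn:admissible_pair_def} is what guarantees that the $Q$- and $P$-constructions are mutually compatible.

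I expect the main obstacle to be verifying that $X$ and $Y$ commute. By block multiplication this reduces to the single identity
\[
S\Gamma + \Sigma\tT^* = T\Sigma + \Gamma\tS^*
\]
on $H^2(\tK)$. I would check it on the generating kernel vectors of $\ker \tS^*$ and $\ker \tT^*$, where both sides unfold into explicit expressions in $P$ and $Q$; the required equality then follows from \eqref{E:pq}, which is the only algebraic bridge between the $Q$-data defining $\Sigma$ and the $P$-data defining $\Gamma$.

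Granted commutativity, the remaining assertions are easy. The operator $p(X,Y)$ is block upper triangular with diagonal blocks $p(S,T)$ and $p(\tS^*,\tT^*)$. The former vanishes by Theorem \ref{thm:isometries}. The latter vanishes because, by taking adjoints, $p(\tS^*,\tT^*)^* = \bar p(\tS,\tT) = M_{\bar p(1/z,1/w)}$ on $H^2(\tK)$, and the symmetry \eqref{eq:Zp-symmetry} rewrites $\bar p(1/z,1/w) = p(z,w)/(z^n w^m)$, which vanishes on $\tV \subset \mathcal Z_p$. Since $p(X,Y)$ is a polynomial in the commuting unitaries $X$ and $Y$ it is normal; a normal block upper triangular operator with zero diagonal must vanish, so $p(X,Y)=0$. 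Finally, commuting unitaries have joint spectrum contained in $\mathbb T^2$, and combined with $p(X,Y)=0$ and the spectral mapping theorem, the joint spectral measure is supported on $\mathcal Z_p\cap \mathbb T^2 = \partial\V$.
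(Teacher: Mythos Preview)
Your proposal is correct and follows the paper's geometric proof closely: the same canonical $\Sigma$, $\Gamma$ built from the $Q$- and $P$-data on the defect spaces, the same reduction of $XY=YX$ to a single off-diagonal identity verified via the admissibility relation~\eqref{E:pq}. One small caution: checking the commutation identity ``on the generating kernel vectors of $\ker \tS^*$ and $\ker \tT^*$'' is not the right test set---those subspaces do not span $H^2(\tK)$. The paper tests against the full family of reproducing kernels $\tK_{(\zeta,\eta)}\gamma$, which does span, and the computation then reduces to $(z\overline{\zeta}-1)PP^*=(w\overline{\eta}-1)QQ^*$ exactly as you anticipate. For the endgame you take a slightly different route than the paper: the paper invokes the Taylor-spectrum inclusion $\sigma_T(X,Y)\subset\sigma_T(S,T)\cup\sigma_T(\tS^*,\tT^*)$ for upper-triangular pairs to land directly in $\overline{\V}\cap\mathbb T^2=\partial\V$, whereas you first kill the off-diagonal of $p(X,Y)$ by normality and then use spectral mapping. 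Both arguments are valid.
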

  This theorem is proved in Section \ref{subsec:dilation-with-relations}.

If $f\in H^\infty_{\mathcal K}(\V)$, it is always the case that $\|f\|_{\infty}\leq
\|f\|_{\V}$.  If $q$ is a polynomial, then the operator $M_q$ on
$H^2(K)$ is equal to $q(S,T)$.  The following corollary is then
immediate from (\ref{eqn:V_norm_is_mult_norm}) and
Theorem~\ref{thm:dilation-with-relations}:
\begin{corollary}\label{cor:polys_are_multipliers}
Every polynomial $q(z,w)$ belongs to $H^\infty_{\mathcal K}(\V)$, and $\|q\|_{\V}=\|q\|_\infty$. 
\end{corollary}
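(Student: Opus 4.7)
The plan is to combine the dilation result Theorem~\ref{thm:dilation-with-relations} with the identification (\ref{eqn:V_norm_is_mult_norm}) of $\|\cdot\|_{\V}$ as a supremum of multiplier norms. Since the inequality $\|q\|_\infty \le \|q\|_{\V}$ is noted immediately before the corollary, the whole task is to prove the reverse inequality $\|q\|_{\V} \le \|q\|_\infty$, and this in turn gives membership in $H^\infty_{\mathcal K}(\V)$ for free (the admissible-kernel inequality \eqref{eqn:H_infty_defn} holds with $M=\|q\|_\infty$).

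Fix an admissible kernel $K$. Multiplication by the polynomial $q$ on $H^2(K)$ is just the operator $q(S,T)$, where $(S,T)=(M_z,M_w)$. By Theorem~\ref{thm:dilation-with-relations}, $(S,T)$ extends to a commuting pair of unitaries $(X,Y)$ whose joint spectral measure is supported in $\partial \V$; moreover, the explicit block upper-triangular form of $X$ and $Y$ displays $H^2(K)$ as a common invariant subspace. Consequently $q(S,T)$ is the restriction of $q(X,Y)$ to $H^2(K)$, giving
\begin{equation*}
   \|M_q\|_{B(H^2(K))} \;=\; \|q(S,T)\| \;\le\; \|q(X,Y)\|.
\end{equation*}
Because $(X,Y)$ is a commuting pair of normal operators, the joint spectral theorem identifies $\|q(X,Y)\|$ with the supremum of $|q|$ over the joint spectrum $\sigma(X,Y)$, which lies in $\partial \V \subset \overline{\V}$. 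Since $q$ is continuous on $\overline{\V}$, this supremum is bounded by $\|q\|_\infty = \sup_{\V}|q|$.

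Putting these together, $\|M_q\|_{B(H^2(K))} \le \|q\|_\infty$ for every admissible $K$. Taking the supremum over $K$ and applying (\ref{eqn:V_norm_is_mult_norm}) yields $\|q\|_{\V} \le \|q\|_\infty$, which in particular shows $q\in H^\infty_{\mathcal K}(\V)$, and combined with the reverse inequality gives $\|q\|_{\V} = \|q\|_\infty$. There is essentially no obstacle here beyond citing the two ingredients correctly; the only small point to verify is that $H^2(K)$ is genuinely invariant under both $X$ and $Y$, which is visible from the triangular shape in the statement of Theorem~\ref{thm:dilation-with-relations}.
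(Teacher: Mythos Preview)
Your proof is correct and follows exactly the approach the paper intends: the corollary is stated as immediate from (\ref{eqn:V_norm_is_mult_norm}) and Theorem~\ref{thm:dilation-with-relations}, and you have simply spelled out the details of that deduction. The only additional observation you make explicit---that the block upper-triangular form of $(X,Y)$ ensures $H^2(K)$ is invariant, so $q(S,T)$ is a restriction of $q(X,Y)$---is precisely what the paper leaves implicit.
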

To extend this result from polynomials to all of $H^\infty (\V)$, we have the following approximation theorem and its corollary.
\begin{theorem}\label{thm:poly-approx}  For each  $f\in H^\infty(\V)$, there exists a sequence of polynomials $p_n$ such that $p_n\to f$ uniformly on compact subsets of $\V$ and $\|p_n\|_\infty\leq \|f\|_\infty$ for all $n$.  
\end{theorem}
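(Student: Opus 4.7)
My plan is a two-stage approximation: first produce a sequence $f_r \in H^\infty(\V)$ (indexed by $r \in (0,1)$) each of which extends holomorphically to an open $\mathbb{C}^2$-neighborhood of $\overline{\V}$, satisfies $\|f_r\|_\infty \le \|f\|_\infty$, and converges to $f$ uniformly on compact subsets of $\V$; then invoke the Oka-Weil theorem on the polynomially convex compact set $\overline{\V}$ to approximate each $f_r$ in the sup norm by polynomials in $(z,w)$, extracting a diagonal subsequence.

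Polynomial convexity of $\overline{\V}$ is immediate from its defining data: if $x \in \mathbb{C}^2$ satisfies $|q(x)| \le \sup_{\overline{\V}}|q|$ for every polynomial $q$, then testing against the defining polynomial $p$ yields $p(x)=0$, while testing against $z$ and $w$ places $x \in \overline{\mathbb{D}}^2$, whence $x \in \Zp \cap \overline{\mathbb{D}}^2 = \overline{\V}$. Consequently, Oka-Weil guarantees that any function holomorphic in an open neighborhood of $\overline{\V}$ is a uniform limit on $\overline{\V}$ of polynomials. To preserve the norm bound, I would arrange in the first stage that $\|f_r\|_\infty \le (1-\varepsilon_r)\|f\|_\infty$ (multiplying by a scalar slightly below $1$ if necessary), and then choose polynomial approximants within sup-norm error $\varepsilon_r\|f\|_\infty$, so that the resulting $p_n$ automatically satisfy $\|p_n\|_\infty \le \|f\|_\infty$.

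The first stage is the crux. I would pass to the normalization $\pi : \widehat{\V} \to \V$, a finite bordered Riemann surface whose boundary is a finite union of real-analytic arcs lying in $\mathbb{T}^2$, and form $\hat f := f \circ \pi \in H^\infty(\widehat{\V})$ together with its $L^\infty$ boundary values $\hat f^*$ with respect to harmonic measure on $\partial \widehat{\V}$. The reflection symmetry \eqref{eq:Zp-symmetry} together with the identification of $\tV$ as the image of $\V$ under $(z,w)\mapsto(1/\overline{z},1/\overline{w})$ allows one to view $\widehat{\V}$ as sitting inside a larger open Riemann surface obtained by gluing $\widehat{\V}$ to the normalization of $\tV$ along $\partial\widehat{\V}$, and to construct a holomorphic Cauchy-type kernel $K_r$ on this larger surface that serves as an approximate identity on $\partial \widehat{\V}$ as $r \to 1$. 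The regularizations $\hat f_r(p) = \int_{\partial\widehat{\V}} K_r(p,q)\,\hat f^*(q)\,dm(q)$ are then holomorphic in a neighborhood of $\overline{\widehat{\V}}$, uniformly bounded by $\|f\|_\infty$, and converge to $\hat f$ locally uniformly. Descending $\hat f_r$ through $\pi$, symmetrizing across sheets and verifying coherence at the finitely many singular points of $\V$, yields functions $f_r$ on $\V$ extending holomorphically to $\mathbb{C}^2$-neighborhoods of $\overline{\V}$.

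The hard part is this first-stage construction. An arbitrary smoothing of boundary values produces only a harmonic extension, so one must use the distinguished-variety structure—specifically the real-analyticity of $\partial\V \subset \mathbb{T}^2$ and the Schwarz-type reflection $\V \leftrightarrow \tV$—to produce a kernel $K_r$ that is genuinely holomorphic on a neighborhood of $\overline{\widehat{\V}}$ and to verify that the pushforward under $\pi$ is well-defined across branch and singular points without loss of the sup norm bound.
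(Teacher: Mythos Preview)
Your overall architecture---normalize, approximate on the surface by functions holomorphic across the boundary, descend to $\V$, then apply Oka--Weil---coincides with the paper's, and your polynomial-convexity argument and final Oka--Weil step are correct.

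The genuine gap is the descent step. The normalization $\pi:\widehat{\V}\to\V$ is \emph{not} a branched covering with a deck group one can average over; it is generically one-to-one, with exceptional behavior confined to finitely many points lying over the singularities of $\V$. A function $g$ holomorphic on (a neighborhood of) $\overline{\widehat{\V}}$ has the form $F\circ\pi$ for some $F$ holomorphic on (a neighborhood of) $\overline{\V}$ if and only if $g$ lies in a specific finite-codimension subalgebra, the common kernel of certain local functionals (a ``connection'' in Gamelin's sense). These relations involve not only value identifications but also derivative constraints---already for the Neil parabola, where $\pi(t)=(t^2,t^3)$, the condition is $g'(0)=0$. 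No ``symmetrization across sheets'' enforces such constraints, and your regularizations $\hat f_r$ have no reason to satisfy them; the fact that $\hat f=f\circ\pi$ does is destroyed by any boundary smoothing.

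The paper confronts exactly this obstacle. It invokes a theorem of Agler--McCarthy that the descendable functions form a finite-codimension subalgebra, then Gamelin's characterization of such subalgebras via connections, and then explicitly \emph{corrects} the approximants: interior constraints are removed one at a time by subtracting suitable elements along a Gamelin filtration; boundary constraints (coming from singularities of $\V$ on $\mathbb T^2$) are handled by multiplying by auxiliary functions $G_n$ that vanish to high order at the relevant boundary preimages, tend to $1$ locally uniformly on the interior, and are bounded by $1$. Your Cauchy-kernel/Schwarz-reflection sketch, even if it can be made to yield holomorphic approximants on a neighborhood of $\overline{\widehat{\V}}$ (itself nontrivial---the paper cites hypo-Dirichlet algebra theory for this step rather than building a kernel), does not address descent at all.
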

\begin{corollary}\label{cor:isometry-of-algebras}
  $H^\infty_{\cK}(\V)=H^\infty(\V)$ isometrically.
\end{corollary}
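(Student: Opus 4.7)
The plan is to use Theorem \ref{thm:poly-approx} to pass from polynomials (where we already have an isometric identification by Corollary \ref{cor:polys_are_multipliers}) to all of $H^\infty(\V)$, invoking only that positivity of a finite matrix is preserved under pointwise limits. As noted in the text, we already have $H^\infty_{\cK}(\V) \subseteq H^\infty(\V)$ with $\|f\|_\infty \leq \|f\|_{\V}$, so it suffices to prove the reverse: every $f \in H^\infty(\V)$ lies in $H^\infty_{\cK}(\V)$ with $\|f\|_{\V} \leq \|f\|_\infty$.

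Fix $f \in H^\infty(\V)$ and set $M = \|f\|_\infty$. By Theorem \ref{thm:poly-approx}, choose polynomials $p_n$ with $p_n \to f$ uniformly on compact subsets of $\V$ and $\|p_n\|_\infty \leq M$ for all $n$. By Corollary \ref{cor:polys_are_multipliers}, each $p_n$ satisfies $\|p_n\|_{\V} = \|p_n\|_\infty \leq M$, which by Definition \ref{defn:schur_class} is the statement that for every admissible kernel $K$,
\begin{equation*}
(M^2 - p_n(z,w)\overline{p_n(\zeta,\eta)})\, K((z,w),(\zeta,\eta)) \succeq 0.
\end{equation*}

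Fix an admissible kernel $K$ and a finite subset $\{(z_j,w_j)\}_{j=1}^N \subset \V$. The positivity above, restricted to this finite set, asserts that a certain $N\alpha \times N\alpha$ Hermitian matrix depending on $p_n$ is positive semidefinite. Since $p_n(z_j,w_j) \to f(z_j,w_j)$ for each $j$, the matrices converge entrywise to the corresponding matrix built from $f$, and the cone of positive semidefinite matrices is closed. Hence
\begin{equation*}
(M^2 - f(z,w)\overline{f(\zeta,\eta)})\, K((z,w),(\zeta,\eta)) \succeq 0
\end{equation*}
on any finite subset of $\V$, which by definition of $\succeq 0$ means this kernel is positive semidefinite on all of $\V \times \V$. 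As the admissible $K$ was arbitrary, $f \in H^\infty_{\cK}(\V)$ with $\|f\|_{\V} \leq M = \|f\|_\infty$. Combined with the a priori inequality $\|f\|_\infty \leq \|f\|_{\V}$, this yields $\|f\|_{\V} = \|f\|_\infty$ and the isometric equality of the two algebras.

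Because all the substantive work has been done upstream, there is no real obstacle in this corollary; the one point requiring a moment's care is the passage to the limit inside the positivity condition, which is why we needed Theorem \ref{thm:poly-approx} to guarantee a uniformly norm-bounded sequence of polynomial approximants (uniform boundedness is automatic here since we need only pointwise convergence of the entries, but the uniform bound on $\|p_n\|_\infty$ is what licenses comparing to the single constant $M$).
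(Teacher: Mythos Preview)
Your argument for the inclusion $H^\infty(\V)\subseteq H^\infty_{\cK}(\V)$ with $\|f\|_{\V}\le\|f\|_\infty$ is correct and is exactly the paper's approach: approximate by polynomials via Theorem~\ref{thm:poly-approx}, use Corollary~\ref{cor:polys_are_multipliers} to identify the two norms on polynomials, and pass to the limit in the positivity condition (the paper packages this last step as Proposition~\ref{props-of-H-infty}, but your explicit finite-matrix-limit argument is equivalent).

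There is, however, a genuine gap in the reverse direction. You write ``As noted in the text, we already have $H^\infty_{\cK}(\V)\subseteq H^\infty(\V)$,'' but look back at Definition~\ref{defn:schur_class}: membership in $H^\infty_{\cK}(\V)$ is defined for an \emph{arbitrary} function $f:\V\to\mathbb C$ satisfying the positivity condition, with no analyticity hypothesis. The sup-norm bound $\|f\|_\infty\le\|f\|_{\V}$ is indeed immediate from positivity at a single point, but the assertion that such an $f$ is \emph{holomorphic} on $\V$ is not; the introductory remark you are relying on is somewhat imprecise on this point, and the Reader's Guide explicitly flags that Section~\ref{sec:analytic-functions} must ``show that elements of $H^\infty_{\mathcal K}(\V)$ are actually holomorphic on $\V$.''

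The paper's proof supplies this missing half: it invokes the existence (from \cite{Kn2}) of an admissible kernel $K$ with $K((z,w),(z,w))\neq 0$ everywhere on $\V$, then uses a lurking isometry argument to produce a contractive matrix-valued $H^\infty(\mathbb D)$ function $F$ satisfying $F(z)^*Q(z,w)^*=\overline{f(z,w)}\,Q(z,w)^*$ on $\V$. Since some coordinate $q_j$ of $Q$ is nonvanishing near any given point, one can solve for $f$ as a ratio of functions extending holomorphically to a bidisk neighborhood, and this exhibits $f$ as holomorphic on $\V$. Without this step your argument establishes only an isometric inclusion, not an equality.
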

Theorem~\ref{thm:pick-final} is then immediate from Theorem~\ref{thm:interpolate} and Corollary~\ref{cor:isometry-of-algebras}.




\subsection{Readers Guide}
 \label{subsec:guide}
The remainder of the paper is organized as follows.
Section~\ref{sec:examples} considers a number of examples of the Pick
interpolation theorem.  The result for $H^\infty_{\mathcal K}$, Theorem \ref{thm:interpolate}, is
proved in Section \ref{sec:kernel-structures} by verifying the
collection of admissible kernels satisfies the conditions of the
abstract interpolation theorem from \cite{JKM-kernels}.  Section
\ref{sec:ad-pairs} develops facts about admissible pairs, admissible
kernels and determinantal representations needed for the sequel.
Section \ref{sec:shifts} treats the pairs of operators that play the
role of bundle shifts on $\V$. It contains proofs of
Theorems~\ref{thm:isometries} and \ref{thm:dilation-with-relations}.
Theorem~\ref{thm:poly-approx} is proved in
Section~\ref{sec:approx}. The proof here is function-theoretic and
independent of the other sections.  Finally, in Section
\ref{sec:analytic-functions} we prove
Corollary~\ref{cor:isometry-of-algebras}, and in particular show that
elements of $H^\infty_{\mathcal K}(\V)$ are actually holomorphic on
$\V$.

\section{Examples} 
 \label{sec:examples}
  It is instructive to consider a couple of examples
  related to existing Pick interpolation theorems.

\subsection{The Neil Parabola}
 \label{sec:Neil}
  The Neil parabola $\mathcal N$ is the distinguished variety determined
  by the polynomial $p(z,w)=z^3-w^2$.  Note the singularity
  at the origin.  It is easily checked that the pair
 \begin{equation*}
  \begin{split}
    Q(z,w)=& \begin{pmatrix} 1 & w \end{pmatrix} \\
    P(z,w)=& \begin{pmatrix} 1& z & z^2 \end{pmatrix}
  \end{split}
 \end{equation*}
  is an admissible pair with corresponding reproducing kernel
 \[
   \frac{1 + w\oeta}{1-z\ozeta}
       = K((z,w),(\zeta,\eta)) = \frac{1+z\ozeta +z^2\ozeta^2}{1-w\oeta}.
 \]
   Again, we emphasize that the equalities hold on $\V\times \V$.  Similarly, the pair
 \[
  \begin{split}
   Q(z,w)=& \begin{pmatrix} z & w\end{pmatrix}\\
   P(z,w)=& \begin{pmatrix} w & z & z^2 \end{pmatrix}
  \end{split}
 \]
  is admissible. The corresponding kernel vanishes at $((0,0),(0,0))$.

The next proposition identifies the algebra $H^\infty_{\mathcal K}(\mathcal N)$ with  $\mathcal A:=\{f\in
  H^\infty(\mathbb D): f^\prime(0)=0\}$.  

 \begin{proposition}
  \label{prop:Neil}
  The parametrization $\Psi:\mathbb D \to \mathcal N$ of the  Neil parabola
  given by $\Psi(t)=(t^2,t^3)=(z(t),w(t))$ induces an isometric isomorphism
  $\Psi^*: H^\infty_{\mathcal K}(\N) \to \mathcal A$ defined by $\Psi^*f(t)=f(\Psi(t))$.
 \end{proposition}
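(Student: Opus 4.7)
The plan is to prove the proposition by reducing it to the identification $H^\infty(\N) \cong \mathcal A$ together with the isometric identification $H^\infty_{\mathcal K}(\N) = H^\infty(\N)$ provided by Corollary \ref{cor:isometry-of-algebras}. The heart of the argument is thus the classical algebraic-geometric identification of $H^\infty$ on the Neil parabola with $\mathcal A$ on the disk.

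For the forward direction, given $f \in H^\infty(\N)$, the pullback $g := \Psi^* f$ is bounded on $\mathbb D$ with $\|g\|_\infty = \|f\|_\infty$. On $\N \setminus \{(0,0)\}$, $\Psi$ is a biholomorphism with inverse $(z,w) \mapsto w/z$, so $g$ is holomorphic on $\mathbb D \setminus \{0\}$ and extends holomorphically across the origin by Riemann's removable singularity theorem. The constraint $g'(0) = 0$ follows from holomorphy of $f$ at the cusp: by definition there is a local extension $F(z,w) = \sum c_{ij} z^i w^j$ of $f$ to a neighborhood of $(0,0)$ in $\mathbb C^2$, and substituting gives $g(t) = F(t^2, t^3) = \sum c_{ij} t^{2i+3j}$, which has no $t^1$ term since $1$ lies outside the numerical semigroup $\langle 2, 3\rangle$.

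For the reverse direction, given $g \in \mathcal A$, I will define $f$ on $\N$ by $f(\Psi(t)) := g(t)$, which is well-defined because $\Psi$ is injective. Holomorphy on $\N \setminus \{(0,0)\}$ is immediate from that of $g$ together with $\Psi^{-1}$. At the cusp, I construct a $\mathbb C^2$-extension of $f$ by writing $g(t) = a_0 + \sum_{n \geq 2} a_n t^n$ and grouping even and odd powers: set $F(z, w) := a_0 + \sum_{k \geq 1} a_{2k} z^k + \sum_{k \geq 0} a_{2k+3} z^k w$, using that each even $n = 2k$ yields $t^n = z^k$ while each odd $n = 2k+3 \geq 3$ yields $t^n = z^k w$ on $\N$. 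Schwarz-type estimates $|a_n| \leq \|g\|_\infty$ force convergence of $F$ on a bidisk neighborhood of $(0,0)$, and $F$ restricts to $f$ on $\N$ near the cusp, so $f \in H^\infty(\N)$ with $\|f\|_\infty = \|g\|_\infty$. Together with the first direction, this shows $\Psi^* : H^\infty(\N) \to \mathcal A$ is a surjective isometry of algebras.

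Composing with Corollary \ref{cor:isometry-of-algebras} applied to $\V = \N$ gives the claimed isometric isomorphism $\Psi^*: H^\infty_{\mathcal K}(\N) \to \mathcal A$. The main obstacle in this plan is the local analysis at the cusp: one must verify that every $g \in \mathcal A$ lifts to a function on $\N$ which is holomorphic in the ambient $\mathbb C^2$ sense, not merely off the singular point. Once one recognizes that the local ring of $\N$ at $(0,0)$ corresponds to the subring $\mathbb C + t^2 \mathbb C\{t\}$ of convergent series with vanishing linear coefficient, this identification becomes transparent, and the proposition reduces to the deep isometric identification $H^\infty_{\mathcal K} = H^\infty$ established elsewhere in the paper.
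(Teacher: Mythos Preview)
Your proof is correct, but it takes a different route from the paper's. You invoke Corollary~\ref{cor:isometry-of-algebras} to identify $H^\infty_{\mathcal K}(\N)$ with $H^\infty(\N)$ isometrically, and then carry out a direct function-theoretic identification of $H^\infty(\N)$ with $\mathcal A$ by analyzing the cusp. The paper instead works only with Corollary~\ref{cor:polys_are_multipliers} (the equality $\|q\|_{\V}=\|q\|_\infty$ for polynomials $q$): given a polynomial $p\in\mathcal A$, it lifts $p$ to a polynomial $q$ on $\N$ via the numerical semigroup $\langle 2,3\rangle$, gets $\|q\|_{\N}=\|p\|$ from Corollary~\ref{cor:polys_are_multipliers}, and then passes to general $g\in\mathcal A$ by Ces\`aro approximation and the completeness result Proposition~\ref{props-of-H-infty}.

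The trade-off is this: your argument is conceptually cleaner but leans on Corollary~\ref{cor:isometry-of-algebras}, which in turn rests on the polynomial approximation Theorem~\ref{thm:poly-approx} and its Riemann-surface machinery. The paper's argument is designed precisely to avoid that dependence; as the remark at the end of the paper's proof notes, the hands-on argument actually \emph{establishes} Corollary~\ref{cor:isometry-of-algebras} for the Neil parabola as a byproduct, rather than consuming it. There is no circularity in your version (Theorem~\ref{thm:poly-approx} does not use Proposition~\ref{prop:Neil}), but you should be aware that you are importing the paper's deepest analytic result into what was intended as an illustrative, self-contained example.
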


 \begin{proof}
   Since every $f\in H^\infty_{\mathcal K}(\N)$ is bounded,
   the mapping $\Psi^*$ does map into $H^\infty(\mathbb D)$.
   On the other hand, $(f\circ \Psi)^\prime(0)=Df(\Psi(0))\cdot D\Psi(0)=0$.
   Thus $\Psi^*$ maps into $\mathcal A$. Moreover,
   if $f\in H^\infty_{\mathcal K}(\N)$, then 
 \[
   \|f\|_{\N} \ge \|f\|_\infty \geq \|\Psi^*f\|
 \]
   and thus $\Psi^*$ is contractive.  It remains to show that $\Psi^*$
   is isometric and onto.  We first prove this for polynomials; the
   general case will follow by approximation.

So, let $p\in \mathcal A$ be a polynomial.  Then $p(t)=p_0 +
\sum_{j=2}^N p_j t^j$.  Each $j\ge 2$ can be written as
$j=2\alpha+3\beta$ for non-negative integers $\alpha,\beta$. (Of
course this representation is not unique; we fix one such for each
$j$.)  Let $q(z,w)=p_0 +\sum_{\alpha,\beta} p_{2\alpha+3\beta}z^\alpha
w^\beta$.  Thus $(\Psi^*q)(t)=p(t)$.  If we write $\|p\|$ for the
supremum of $|p|$ over the unit disk, it follows that
$\|q\|_\infty=\|p\|$.  By Corollary~\ref{cor:polys_are_multipliers},
$\|q\|_{\mathcal N} =\|p\|$.

Now suppose that $g\in \mathcal A$ is arbitrary.  There exists a
sequence of polynomials $p_n\in\mathcal A$ such that $\|p_n\|\le
\|g\|$ and $p_n$ converges pointwise to $g$ (e.g., one can take $p_n$
to be the $n^{th}$ Ces\'aro mean of the Fourier series of $g$).  For
each $n$ there is a polynomial $q_n$ such that $\Psi^*(q_n)=p_n$ and
$\|q_n\|_{\N} =\|p_n\|$.  Then $(q_n)$ 
is Cauchy (as $(p_n)$ is) and hence converges pointwise
 and in norm  to a function
$f\in H^\infty_{\mathcal K}$ (see Proposition \ref{props-of-H-infty})
  satisfying $\Psi^*f=g$.
By construction, $\|f\|_{\N}=\|g\|$. Thus
$\Psi^*$ is isometric and onto, and the proof is complete.

Note that the argument in this proof directly establishes
 Corollary \ref{cor:isometry-of-algebras} 
  for the Neil parabola. 
\end{proof}

Interpolation on the Neil parabola is thus equivalent to the
constrained Pick interpolation in the algebra $\mathcal A$, which was
considered by \cite{DPRS} and also by \cite{BBtH, DP}.  In \cite{DPRS}
it is shown that for the Pick interpolation problem in $\mathcal A$,
it suffices to consider the family of kernels
\begin{equation}\label{E:DPRS_family}
k_{a,b}(s,t) =(a+bs)\overline{(a+bt)} + \frac{s^2\overline{t^2}}{1-s\overline{t}}
\end{equation}
over all complex numbers $a,b$ with $|a|^2+|b|^2=1$.  On the other
hand, Proposition~\ref{prop:Neil} and Theorem~\ref{thm:pick-final}
say that it suffices to consider the family of kernels obtained by
pulling back admissible kernels on $\mathcal N$ under the map $\Psi$,
that is, all kernels on $\mathbb D\times \mathbb D$ of the form
\begin{equation}\label{E:neil_pullback_kernels}
K(s,t) =\frac{Q(s^2, s^3)Q(t^2, t^3)^*}{1-s^2\overline{t^2}} =\frac{P(s^2, s^3)P(t^2, t^3)^*}{1-s^3\overline{t^3}}
\end{equation} 
where $P, Q$ are an admissible pair.  It is not hard to see that the
latter family contains the former (up to conjugacy).  Indeed, given
$a,b$, define
\begin{align*}
Q(z,w) &= (az+bw \quad \overline{b}z^2 -\overline{a} zw ),\\
P(z,w)&= (az+bw \quad \overline{b}zw-\overline{a}w^2 \quad z^2).
\end{align*}
This is an admissible pair, and if $K$ is then defined by
\eqref{E:neil_pullback_kernels}, then
\begin{equation*}
K(s,t)=s^2k_{a,b}(s,t)\overline{t}^2.
\end{equation*}

 In forthcoming work we show that fairly generally for distinguished varieties
  that it is necessary to consider only the scalar kernels and that moreover
 in the case of the Neil parabola we then obtain the result of \cite{DPRS}.
  
\subsection{The annulus}
 \label{sec:annulus}


Fix $0<r<1$ and consider the annulus $\mathbb A :=\{z\in\mathbb C:
r<|z|<1\}$.  Let $A(\mathbb A)$ denote the algebra of functions
analytic in $\mathbb A$ and continuous in $\overline{\mathbb A}$, and
$H^\infty(\mathbb A)$ the algebra of all bounded analytic functions in
$\mathbb A$, both equipped with the uniform norm.  By a theorem of
Pavlov and Fedorov \cite{PF}, there exists an algebraic pair of inner
functions $\theta_0,\theta_1$ in $A(\mathbb A)$ such that the
polynomials in $\theta_0,\theta_1$ are dense in $A(\mathbb A)$.  Here
``algebraic'' means that there is a polynomial $p$ such that
$p(\theta_0,\theta_1)=0$.  Since the $\theta_j$ are inner, this $p$
must define a distinguished variety, which we denote $\V$.  Now, if
$g\in H^\infty(\mathbb A)$, it is not hard to prove that there exists
a sequence of functions $f_n\in A(\mathbb A)$ such that $f_n\to g$
pointwise and $\|f_n\|\leq \|g\|$ for all $n$.  Evidently these $f_n$
may be taken to be polynomials in $\theta_0,\theta_1$.  Imitating the
proof of Proposition~\ref{prop:Neil} (with $\theta_0,\theta_1$ in
place of the inner functions $t^2, t^3$) gives
 \begin{proposition}\label{prop:annulus}
   The spaces $H^\infty_{\mathcal K}(\V)$ and $H^\infty(\mathbb A)$
   are isometrically isomorphic.
 \end{proposition}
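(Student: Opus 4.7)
The plan is to imitate the proof of Proposition~\ref{prop:Neil}, with the inner functions $\theta_0,\theta_1$ playing the role of $t^2, t^3$. Define $\Psi: \mathbb A \to \V$ by $\Psi(t) = (\theta_0(t), \theta_1(t))$; this lands in $\V$ because $p(\theta_0, \theta_1) \equiv 0$ by hypothesis and $|\theta_j(t)| < 1$ on $\mathbb A$ by the maximum modulus principle applied to the non-constant inner functions. Next, introduce the pullback $\Psi^* f := f \circ \Psi$. Since each $f \in H^\infty_{\mathcal K}(\V)$ is bounded, $\Psi^* f$ is bounded on $\mathbb A$, and it is holomorphic because $f$ is locally the restriction of a holomorphic function in two complex variables (even at singular points of $\V$, by the definition in Section~\ref{sec:intro}), composed with the holomorphic map $\Psi$. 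The chain $\|\Psi^* f\|_{H^\infty(\mathbb A)} \le \|f\|_\infty \le \|f\|_\V$ shows $\Psi^*$ is contractive into $H^\infty(\mathbb A)$.

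The heart of the argument is to show $\Psi^*$ is isometric and surjective. I would first handle polynomials in $\theta_0,\theta_1$: given $g = \rho(\theta_0, \theta_1)$ for some $\rho \in \mathbb C[z,w]$, take $q := \rho$ viewed as a polynomial on $\V$. Then $\Psi^* q = g$, and Corollary~\ref{cor:polys_are_multipliers} gives $\|q\|_\V = \|q\|_\infty$. The identification $\|q\|_\infty = \|g\|_{H^\infty(\mathbb A)}$ follows from density of $\Psi(\mathbb A)$ in $\V$: the image is a one-dimensional analytic subvariety of the one-dimensional irreducible algebraic curve $\V$ (reducing to irreducible components of $p$ if necessary), and continuity of $q$ on $\V$ then equates the two suprema.

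For a general $g \in H^\infty(\mathbb A)$, I would use the approximation flagged in the excerpt: polynomials $g_n$ in $\theta_0, \theta_1$ with $\|g_n\|_{H^\infty(\mathbb A)} \le \|g\|$ and $g_n \to g$ pointwise on $\mathbb A$. Lifting $g_n = \Psi^* q_n$ for polynomials $q_n$ on $\V$ gives $\|q_n\|_\V \le \|g\|$ by the previous paragraph. Using weak-$*$/normal-family compactness of the closed ball in $H^\infty_{\mathcal K}(\V)$ (as provided by Proposition~\ref{props-of-H-infty}), extract a subsequential pointwise limit $f \in H^\infty_{\mathcal K}(\V)$ with $\|f\|_\V \le \|g\|$. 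Passing to the limit in $(\Psi^* q_{n_k})(t) = g_{n_k}(t) \to g(t)$ yields $\Psi^* f = g$, and the chain $\|f\|_\V \le \|g\| = \|\Psi^* f\|_{H^\infty(\mathbb A)} \le \|f\|_\V$ forces equality. The main obstacle I anticipate is the density of $\Psi(\mathbb A)$ in $\V$, which amounts to ruling out components of $\V$ missed by the parametrization; this should follow from the Pavlov--Fedorov construction, since the fact that $\theta_0,\theta_1$ generate $A(\mathbb A)$ forces the algebraic relation between them to carve out a single essential component captured by $\Psi$.
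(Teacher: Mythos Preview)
Your argument is correct and follows the same route the paper indicates: imitate the proof of Proposition~\ref{prop:Neil}, replacing $(t^2,t^3)$ by $(\theta_0,\theta_1)$. Two small points are worth tightening. First, Proposition~\ref{props-of-H-infty} gives only closedness of the ball under bounded pointwise convergence, not compactness; the subsequential pointwise limit you want comes from a normal-families argument (Montel on the desingularizing surface, say), after which Proposition~\ref{props-of-H-infty} places the limit in $H^\infty_{\mathcal K}(\V)$ with the correct norm bound. Second, your density concern is legitimate, but in fact one can arrange outright surjectivity of $\Psi$: take $p$ to be the minimal (irreducible) polynomial vanishing on $(\theta_0,\theta_1)$, so that $\V$ is connected; since $\theta_0,\theta_1$ separate points of $\overline{\mathbb A}$ and are inner, $\Psi:\mathbb A\to\V$ is a proper holomorphic injection, hence has clopen image, hence is onto. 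With surjectivity in hand the $q_n$ converge pointwise on all of $\V$ and no subsequence is needed, exactly as in the Neil case.
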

Of course, this can also be obtained as a special case of
Corollary~\ref{cor:isometry-of-algebras}. The prototype of Pick
interpolation theorems involving a family of kernels is the
interpolation Theorem of Abrahamse on multiply connected domains
\cite{Ab}. (See also \cite{Sarasonannulus}.)  Thus while Theorem~\ref{thm:pick-final} (actually its refinement
 to just scalar admissible kernels) gives an interpolation condition for the
annulus, we do not know if it is the same as that of
Abrahamse.  More generally, the results of Raghupathi \cite{R2} and Davidson-Hamilton \cite{DH} can be applied to obtain an interpolation theorem on distinguished varieties, by first lifting to the desingularizing Riemann surface and then uniformizing this surface as the quotient of the disk by a Fuchsian group. The interpolation theorem of \cite{R2} does indeed reduce to that of Abrahamse in the case of the annulus.  So, more generally, the question is open whether or not Theorem~\ref{thm:pick-final} gives the same conditions as those of \cite{R2}.

\section{Kernel structures}
 \label{sec:kernel-structures}
  Our proof of Theorem \ref{thm:interpolate}
  relies on an application of the main result
  from \cite{JKM-kernels} which, for the reader's
  convenience, we outline in this section.
  This approach to Pick interpolation complements that in \cite{Ag}.

  Let $M_n$ denote the set of $n\times n$ matrices with
  entries from  $\mathbb C$. An  
  $M_n$-valued kernel on a set $X$ is a positive
  semi-definite function $k:X\times X \to M_n$.
  Of course, our admissible kernels on $\V$
  are examples.  In what follows we use $z^*$
  to denote the complex conjugate of a complex
 number $z$ (anticipating that the results 
 are valid for matrix-valued functions). 

\begin{definition}
 \label{def:kernel-family}
   Fix a set $X$ and a sequence $\mathcal K=(\mathcal K_n)$
   where each $\mathcal K_n$ is a set of $M_n$-valued kernels
   on $X$.  

   The collection $\mathcal K$ is an {\em Agler interpolation
   family of kernels} provided:
 \begin{itemize}
      \item[(i)] if $k_1 \in \mathcal K_{n_1}$ and $k_2\in\mathcal K_{n_2}$, 
       then  $k_1\oplus k_2 \in \mathcal K_{n_1+n_2}$;
   \item[(ii)] if $k\in\mathcal K_n$, $z\in X$, 
         $\gamma\in \mathbb C^n$, and $\gamma^* k(z,z)\gamma\neq 0$,  then
     there exists an $N$, a kernel  $\kappa \in\mathcal K_N$,
     and a function $G:X\to M_{n,N}$ such that     
    \begin{equation*}
      k^\prime(x,y)  :=  k(x,y)
          - \frac{k(x,z)\gamma \gamma^* k(z,y)}{\gamma^*k(z,z)\gamma} 
          = G(x) \kappa(x,y)G(y)^*;
    \end{equation*}
   \item[(iii)] for each finite $F\subset X$
    and for each  $f:F\to \mathbb C$,  there is a $\rho>0$ 
       such that, for each
       $k\in\mathcal K$,
    \begin{equation*}
      F\times F \ni (x,y) \mapsto (\rho^2 -f(x)f(y)^*)k(x,y) 
    \end{equation*}
      is a positive semi-definite kernel on $F$; and
    \item[(iv)] for each $x\in X$ there is a $k\in\mathcal K$ 
      such that $k(x,x)$
      is nonzero (and positive semi-definite).
 \end{itemize}
\end{definition}

\begin{theorem}{\cite[Theorem 1.3]{JKM-kernels}}
 \label{thm:main}
  Suppose $\mathcal K$ is an Agler interpolation family of kernels on $X$.
  Further suppose $Y\subset X$ is finite
  and $g:Y\to \mathbb C$ and $\rho\ge 0$.
  If for each $k\in\mathcal K$ the kernel
 \begin{equation}
  \label{eq:thm-main-Y}
    Y\times Y \ni (x,y) \mapsto (\rho^2 -g(x)g(y)^*)k(x,y) 
 \end{equation} 
  is positive semi-definite, then there exists $f:X\to \mathbb C$ 
  such that $f|_Y = g$ and for each $k\in\mathcal K$ the kernel
 \begin{equation}
  \label{eq:thm-main-X}
      X\times X \ni (x,y) \mapsto (\rho^2 -f(x)f(y)^*)k(x,y) 
 \end{equation}
      is positive semi-definite.
\end{theorem}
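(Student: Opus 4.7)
The plan is to prove Theorem \ref{thm:main} by combining a one-point extension lemma with a Tychonoff compactness argument in the product topology on $\mathbb C^X$. As a preliminary observation, axiom (iv) already forces every candidate $f$ satisfying \eqref{eq:thm-main-X} to live in the closed disk $\overline{\mathbb D}_\rho := \{\lambda \in \mathbb C : |\lambda| \le \rho\}$: for each $x \in X$, pick $k \in \mathcal K$ with $k(x,x) \neq 0$ and read off $(\rho^2 - |f(x)|^2) k(x,x) \succeq 0$ from the diagonal of \eqref{eq:thm-main-X}, which forces $|f(x)| \le \rho$. Thus candidate extensions live in the compact product space $\overline{\mathbb D}_\rho^X$.

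The crux is the following one-point extension claim: if $F \subseteq X$ is finite, $h : F \to \mathbb C$ satisfies $(\rho^2 - h(x)h(y)^*) k(x,y) \succeq 0$ on $F$ for every $k \in \mathcal K$, and $z \in X \setminus F$, then there exists $\lambda \in \overline{\mathbb D}_\rho$ such that $h \cup \{(z, \lambda)\}$ satisfies the same positivity on $F \cup \{z\}$ for every $k$. To prove it, fix $k \in \mathcal K_n$ and $\gamma \in \mathbb C^n$ with $\gamma^* k(z,z)\gamma > 0$; testing the $(|F|+1)$-block Pick matrix against vectors supported in the $\gamma$-direction at $z$ and applying a Schur complement reduces the positivity condition to an inequality on $F$ involving the rank-one-subtracted kernel
\[
k'(x,y) = k(x,y) - \frac{k(x,z)\gamma \gamma^* k(z,y)}{\gamma^* k(z,z)\gamma}.
\]
Axiom (ii) now factors $k' = G\kappa G^*$ with $\kappa \in \mathcal K$, and after a short algebraic manipulation the Schur condition rearranges to a positivity statement mixing $(\rho^2 - h(x)h(y)^*)\kappa(x,y)$ (which is $\succeq 0$ by the hypothesis applied to $\kappa$, conjugated by $G$) and a rank-one residue controlled by $\lambda$. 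A classical Schur/commutant-lifting argument --- equivalently, an iteration of axiom (ii) implementing the Schur algorithm, terminating in the trivial case $F = \emptyset$ --- then parametrizes the set $\Delta_{k,\gamma}$ of admissible $\lambda$ as a nonempty closed disk in $\overline{\mathbb D}_\rho$. Axiom (i) now yields the finite intersection property for the family $\{\Delta_{k,\gamma}\}$: any finite collection $(k_j, \gamma_j)$ amalgamates into a single pair $(\bigoplus_j k_j, \bigoplus_j \gamma_j)$ whose disk lies inside $\bigcap_j \Delta_{k_j,\gamma_j}$. Compactness of $\overline{\mathbb D}_\rho$ forces $\bigcap_{k,\gamma} \Delta_{k,\gamma} \neq \emptyset$, and any element is the sought $\lambda$.

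For the global extension, set
\[
\mathcal E_F = \bigl\{f \in \overline{\mathbb D}_\rho^X : f|_Y = g \text{ and } (\rho^2 - f(x)f(y)^*) k(x,y) \succeq 0 \text{ on } F \text{ for every } k \in \mathcal K\bigr\}
\]
for each finite $F \supseteq Y$. Each $\mathcal E_F$ is closed in the product topology (the positivity condition on a finite set is cut out by finitely many closed polynomial inequalities in the values $f(x)$), and by iterating the one-point extension starting from $g$ it is nonempty. The family $\{\mathcal E_F\}_F$ is directed under inclusion as $F$ grows, so has the finite intersection property in the compact space $\overline{\mathbb D}_\rho^X$; Tychonoff's theorem produces $f \in \bigcap_F \mathcal E_F$, which extends $g$ and satisfies \eqref{eq:thm-main-X}.

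The main obstacle is the one-point extension step, and specifically the verification that $\Delta_{k,\gamma}$ is nonempty. Axiom (ii) is calibrated precisely to convert the Schur-complement positivity into a statement about a kernel already in $\mathcal K$ and hence controlled by the hypothesis, so the single-kernel extension reduces inductively to the trivial case. Axioms (i), (iii), and (iv) furnish the amalgamation across $k$'s, the normalization, and the \emph{a priori} bound that keep everything inside a compact region where the Tychonoff argument can close the proof.
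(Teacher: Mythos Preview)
The paper does not prove Theorem~\ref{thm:main}; it is quoted verbatim from \cite{JKM-kernels} and used as a black box to derive Theorem~\ref{thm:interpolate}. So there is no ``paper's own proof'' to compare against here.

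As for your sketch on its merits: the overall architecture---a one-point extension lemma driven by axiom~(ii), amalgamation via axiom~(i), and a Tychonoff argument to pass from finite to arbitrary $X$---is exactly the standard route and is the approach taken in \cite{JKM-kernels}. The compactness half is fine.

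The gap is in the one-point extension. You correctly identify that axiom~(ii) converts the Schur complement condition at the new point $z$ into a positivity statement involving a kernel $\kappa$ already in $\mathcal K$. But the sentence ``a classical Schur/commutant-lifting argument \dots\ parametrizes $\Delta_{k,\gamma}$ as a nonempty closed disk'' is doing almost all the work and is not a proof. Concretely: after the Schur complement and the substitution $k' = G\kappa G^*$, one must exhibit an explicit $\lambda$ (or at least show the admissible set is nonempty), and this requires an honest computation showing how the hypothesis $(\rho^2 - hh^*)\kappa \succeq 0$ on $F$ controls the residual rank-one term. Invoking ``the Schur algorithm terminating at $F=\emptyset$'' is circular unless you actually carry out the induction, and the base case and inductive step both need to be written down. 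You also need to handle vectors $\gamma$ with $\gamma^* k(z,z)\gamma = 0$ separately (these impose no constraint, by Cauchy--Schwarz on the kernel). Finally, you mention axiom~(iii) only as ``normalization'' without saying what it normalizes; in \cite{JKM-kernels} it is what guarantees, at each stage, that the constraints are not vacuously inconsistent---you should pin down exactly where it enters your argument or explain why it is not needed under your hypotheses.
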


  That the collection $\mathcal A$ of admissible kernels on $\V$ is
  an Agler-interpolation family is proved in the following 
 subsections. Theorem \ref{thm:interpolate} then follows.

  The direct sum of admissible kernels is evidently admissible and a
  result of \cite{Kn2} says that there is an admissible kernel $K$ on
  $\V$ such that $K((z,w),(z,w))$ does not vanish on $\V$. (See
  Theorem 11.3 of \cite{Kn2}.) Hence $\mathcal A$ satisfies conditions
  (i) and (iv).
  

\subsection{Compression Stability}

  That $\mathcal A$ satisfies condition (ii) 
  of Definition \ref{def:kernel-family} is proved 
  in this subsection.  

We begin with the observation that condition (ii) in the definition of
an admissible pair (Definition~\ref{def:admissablePQ}) implies a
stronger version of itself; this will be needed in the proofs of both
Theorem~\ref{thm:interpolate} and Lemma~\ref{lem:Qinvertible} below.

\begin{lemma}\label{lem:full_rank_ae}  If $Q(z,w)$ is an $M_{\alpha,m\alpha}$-valued polynomial and $Q$ has
full rank at some point of each irreducible component of $\mathcal
Z_p$, then $Q$ has full rank at all but finitely many points of
$\mathcal Z_p$.
\end{lemma}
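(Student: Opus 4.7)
The plan is to show that the set of points of $\mathcal{Z}_p$ where $Q$ fails to have full rank is a proper algebraic subset of each irreducible component, and then use the fact that $\mathcal{Z}_p$ is a (one-dimensional) algebraic curve to conclude finiteness.

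First, I would describe the rank-deficient set algebraically. A matrix in $M_{\alpha, m\alpha}$ has rank less than $\alpha$ if and only if all of its $\alpha \times \alpha$ minors vanish. Let $\{\Delta_i(z,w)\}$ be the finite collection of $\alpha \times \alpha$ minors of $Q$; each $\Delta_i$ is a polynomial in $(z,w)$. Set
\begin{equation*}
  S = \{(z,w) \in \mathcal{Z}_p : \Delta_i(z,w) = 0 \text{ for all } i\}.
\end{equation*}
Then $S$ is an algebraic subset of $\mathcal{Z}_p$, and $S$ is precisely the set where $Q$ is not of full rank.

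Next, I would fix an irreducible component $C$ of $\mathcal{Z}_p$ and examine $S \cap C$. By hypothesis, there is some point of $C$ at which $Q$ has full rank, i.e., at which at least one minor $\Delta_i$ does not vanish. Thus $S \cap C$ is a proper algebraic subset of $C$ (cut out by the further vanishing conditions $\Delta_i = 0$). Since $p$ is a nonzero polynomial in two variables, $\mathcal{Z}_p$ is a one-dimensional algebraic set in $\mathbb{C}^2$, and each irreducible component $C$ is likewise one-dimensional (an irreducible algebraic curve). A proper Zariski-closed subset of an irreducible curve is zero-dimensional, and hence finite.

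Finally, since $p$ is square-free, $\mathcal{Z}_p$ has only finitely many irreducible components, so $S$ is the union of finitely many finite sets and is therefore finite. This gives the conclusion. The step requiring the most care is the algebraic-geometry fact that a proper closed subset of an irreducible affine curve is finite; this is standard (every proper closed subset has strictly smaller dimension), but I would at least signal it, perhaps by noting that any such proper subset is cut out by a polynomial which does not vanish identically on the irreducible curve, so its intersection with the curve is a zero-dimensional variety and hence finite. No real obstacle is expected beyond correctly handling the possibility of singular or reducible structure on $\mathcal{Z}_p$, which is absorbed by working component-by-component.
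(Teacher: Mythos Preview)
Your proof is correct and follows essentially the same line as the paper's: reduce to a single irreducible component, describe the rank-deficient locus via vanishing of minors, and conclude finiteness because this locus is a proper subvariety of a one-dimensional curve.

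The only notable difference is in the finiteness step. You invoke the general fact that a proper Zariski-closed subset of an irreducible affine curve is zero-dimensional, hence finite. The paper instead picks a \emph{single} $\alpha\times\alpha$ minor $q=\det R$ that is nonzero at the given full-rank point, so the rank-deficient locus sits inside $\mathcal Z_p\cap\mathcal Z_q$, and then applies Bezout's theorem to conclude this intersection is finite. The paper's route is slightly more concrete (one polynomial, one classical theorem) and avoids the appeal to dimension theory; yours is a bit more general and makes the structure of the rank-deficient set explicit as the common zero locus of all minors. Either works without difficulty.
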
 
\begin{proof}  It suffices to assume that $\Zp$ is irreducible.
  Choose $(z_0,w_0)\in \mathcal Z_p$ such that $Q(z_0,w_0)$ has full
  rank.  In particular, there are $\alpha$ columns of $Q$ which form a
  linearly independent set when evaluated at $(z_0,w_0)$. Choose such
  a set of columns and let $R(z,w)$ denote the resulting $\alpha\times
  \alpha$ matrix-valued polynomial. The polynomial $q=\det(R)$ does
  not vanish at the point $(z_0,w_0)$, so the variety $U=\mathcal
  Z_p\cap \mathcal Z_q$ is a proper sub-variety of $\mathcal Z_p$.  By
  Bezout's theorem, $U$ is a finite set, and by construction $Q$ has
  full rank off $U$.
\end{proof}

Now, we may begin the proof.  Fix an admissible kernel $K$
corresponding to the rank $\alpha$ admissible pair $(P,Q)$, a point
$u=(x,y)\in \V,$ a vector $\gamma\in\mathbb C^\alpha$, and assuming
$K(u,u)\gamma\ne 0$, let
\begin{equation*}
 \begin{split}
  K^\prime((z,w),(\zeta,\eta)) = & \\
     \|K(u,u)^{\frac{1}{2}}\gamma\|^2 & K((z,w),(\zeta,\eta)) 
                  -K((z,w),u)\gamma\gamma^*K(u,(\zeta,\eta)).
 \end{split}
\end{equation*}

 Write
\begin{equation*}
 \delta =Q(x,y)^*\gamma \in\mathbb C^{m\alpha}
\end{equation*}
 and note we are assuming $\delta\ne 0$.
 From the definition of $K^\prime$ we have
\begin{equation*}
 K^\prime((z,w),(\zeta,\eta)) = 
 \left(\frac{Q(z,w)Q(\zeta,\eta)^*}{1-z\overline{\zeta}} 
   \right)\left( \frac{\|\delta\|^2}{1-|x|^2} \right) 
 -\frac{Q(z,w)\delta\delta^* Q(\zeta, \eta)}
  {(1-z\overline{x})(1-x\overline{\zeta})}.
\end{equation*}
 Let $\varphi_x$ denote the Mobi\"us map
\begin{equation*}
\varphi_x(z)= \frac{z-x}{1-z\overline{x}};
\end{equation*}
and recall the identity
\[
\frac{(1-z\bar{\zeta})(1-|x|^2)}{(1-z\bar{x})(1-x\bar{\zeta})} =
1-\varphi_x(z)\overline{\varphi_x(\zeta)}.
\]
Then we may rewrite $K^\prime$ as
\begin{equation}\label{eqn:kfactor1}
K^\prime((z,w);(\zeta,\eta))=\frac{Q(z,w)\left( \|\delta\|^2 -\delta\delta^* +\varphi_x(z)\overline{\varphi_x(\zeta)}\delta\delta^* \right)  Q(\zeta, \eta)^*}{(1-z\overline{\zeta})(1-|x|^2)}.
\end{equation}

  Now let $P_\delta$ denote the orthogonal projection of $\mathbb C^{m\alpha}$ 
  onto the one-dimensional subspace spanned by $\delta$.  Define
\begin{equation*}
  B(z):= P_\delta^\bot + \varphi_x(z) P_\delta .
\end{equation*}
  Observe that $B$ is analytic, contraction-valued in the unit disk, 
  and unitary on the unit circle.  We then have
\begin{equation}\label{eqn:kfactor2}
 \|\delta\|^2 -\delta\delta^* +\varphi_x(z)\overline{\varphi_x(\zeta)}\delta\delta^* =\|\delta\|^2 B(z)B(\zeta)^*.
\end{equation}
Finally, define
\begin{equation}\label{eqn:kfactor3}
 Q^\prime(z,w) := (1-w\overline{y})\frac{\|\delta\|}{\sqrt{1-|x|^2}} (1-z\overline{x})Q(z,w)B(z)
\end{equation}
Combining (\ref{eqn:kfactor1}), (\ref{eqn:kfactor2}), and (\ref{eqn:kfactor3}), we get
\begin{equation*}
 (1-w\overline{y})(1-y\overline{\eta})
  (1-z\overline{x})(1-x\overline{\zeta}) K^\prime 
 =  \frac{Q^\prime(z,w)Q^\prime(\zeta,\eta)^*}{1-z\overline{\zeta}}.
\end{equation*}
 An analogous construction produces a 
 $P^\prime$ so that 
\[
  (1-w\overline{y})(1-y\overline{\eta})
  (1-z\overline{x})(1-x\overline{\zeta}) K^\prime 
  =  \frac{P^\prime(z,w)P^\prime(\zeta,\eta)^*}{1-w\overline{\eta}}.
\
\]
  From the construction of $Q^\prime$ it is of rank at most $m\alpha$
 and is a polynomial in $(z,w);$ and similarly for $P^\prime$. Also,
 the rank of $Q^\prime$ is the same as the rank of $Q$, except at the
 point $(x,y)$. Hence by Lemma~\ref{lem:full_rank_ae}, $Q^\prime$ has full rank at some point on each
 irreducible subvariety of $\mathcal Z_p$ (indeed, at all but finitely many points).  Thus
\[
  \kappa((z,w),(\zeta,\eta))
    =\frac{Q^\prime(z,w)Q^\prime(\zeta,\eta)^*}{1-z\overline{\zeta}}
   = \frac{P^\prime(z,w)P^\prime(\zeta,\eta)^*}{1-w\overline{\eta}}
\] 
 is an admissible kernel and
\[
  K^\prime =\frac{1}{(1-z\overline{x})(1-w\overline{y})}
     \kappa \frac{1}{(1-x\overline{\zeta})(1-y\overline{\eta})}.
\]

\subsection{Existence of interpolants}
 Finally, we verify condition (iii) of Definition \ref{def:kernel-family}.  
 Fix a finite set 
\begin{equation*}
  X=\{(z_1, w_1), \dots (z_N, w_N)\}\subset \V
\end{equation*} 
  and let $f:X\to \mathbb C$ be given.  Since polynomials separate
  points of $\mathbb C^2$, for each $j=1, \dots N$, we can choose a polynomial
  $p_j(z,w)$ such that $p_j(z_k,w_k)=\delta_{jk}$ for each $k=1, \dots N$.
Now define
\begin{equation*}
  q(z,w)=\sum_{j=1}^N f(z_j, w_j)p_j(z,w)
\end{equation*}
Then $q\vert_X=f$.  Fix an admissible kernel $K$ and let $S=M_z,
T=M_w$.  As noted in the remark following Definition~\ref{def:Ks}, $S$
and $T$ are contractions, so by applying Ando's inequality to $q$, we
find that $\|q(S,T)\|=\|M_q\|_{B(H^2(K))}$ is bounded by the supremum
of $|q|$ over $\mathbb D^2$, and in particular is bounded independently
of $K$.  It then follows from equation (\ref{eqn:V_norm_is_mult_norm})
that $q\in H^\infty_{\mathcal K}(\V)$, and Definition \ref{def:kernel-family}(iii)
holds with $\rho=\sup_{(z,w)\in\mathbb D^2}|q(z,w)|$.

\begin{remark} \rm
In the verification of Definition \ref{def:kernel-family}(iii), the
bound on $\|q\|_{\V}$ coming from the above argument is quite crude;
if we appeal instead to Corollary~\ref{cor:polys_are_multipliers} we
obtain the sharp value $\|q\|_{\V}=\|q\|_\infty$.  We have
arranged the proof this way only to make the proof of the
interpolation theorem for $H^\infty_{\mathcal K}$ independent of the later dilation results.
\end{remark}


\section{Admissible Pairs}
 \label{sec:ad-pairs}
 Recall that the variety $\Zp$ is the zero set of a square free
 polynomial $p(z,w)$, of bidegree $(n,m)$, as in the introduction
 and $\V=\Zp\cap \mathbb D^2.$ 

 \begin{lemma}
  \label{lem:Qinvertible}
  If $P,Q$ is an $\alpha$-admissible pair, then for all but finitely
  many $\lambda\in\mathbb D$ there exist distinct points $\mu_1,\dots,
  \mu_m\in\mathbb D\setminus\{0\}$ such that $(\lambda,\mu_j)\in \V$
  and the $m\alpha \times m\alpha$ matrices
\begin{equation*}
  \begin{pmatrix} Q(\lambda,\mu_1)^* & \dots & Q(\lambda,\mu_m)^*
   \end{pmatrix}, \ \ \ \
   \begin{pmatrix} Q(\frac{1}{\lambdastar},\frac{1}{\mustar_1})^* & \dots & 
          Q(\frac{1}{\lambdastar},\frac{1}{\mustar_m})^*
   \end{pmatrix}
\end{equation*}
  are invertible.
 \end{lemma}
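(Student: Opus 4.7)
The plan is to prove invertibility of both matrices by passing to the associated block Gram matrix and using the admissibility identity \eqref{eqn:admissible_pair_def} to trade the $Q$-blocks for $P$-blocks, whereupon a clean factorization separates a positive-definite Szeg\H{o}-type scalar kernel from the full-rank $P$-data.

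First I will handle the genericity. The leading coefficient of $p$ in $w$ is a nonzero polynomial in $z$; the $w$-discriminant of $p$ is also a nonzero polynomial in $z$ (because $p$, being squarefree in $\mathbb{C}[z,w]$, remains squarefree in $\mathbb{C}(z)[w]$ by Gauss); and $p(z,0)\not\equiv 0$ (because if $w$ divided $p$ then $\{(z,0):z\in\mathbb{C}\}\subseteq\Zp$ would violate the distinguished-variety condition). So for all but finitely many $\lambda\in\mathbb{D}$, the polynomial $p(\lambda,\cdot)$ has exactly $m$ distinct roots $\mu_1,\dots,\mu_m$, all lying in $\mathbb{D}\setminus\{0\}$ (distinguishedness forces them into $\mathbb{D}$). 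By Lemma \ref{lem:full_rank_ae} applied to $P$, the set of points of $\Zp$ where $P$ fails to have rank $\alpha$ is finite; projecting this set onto the first coordinate, and doing the same for its image under $(z,w)\mapsto(1/\overline{z},1/\overline{w})$, excludes only finitely many further $\lambda$, beyond which $P(\lambda,\mu_j)$ and $P(1/\overline{\lambda},1/\overline{\mu_j})$ all have full rank $\alpha$.

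For such a $\lambda$, let $N(\lambda)$ be the $m\alpha\times m\alpha$ matrix whose $j$-th block row is $Q(\lambda,\mu_j)$, so that $N(\lambda)^*$ is precisely the first block matrix in the statement. The admissibility identity evaluated at $(\lambda,\mu_j),(\lambda,\mu_k)\in\V$ gives
\[
G_{jk}(\lambda):=\frac{Q(\lambda,\mu_j)Q(\lambda,\mu_k)^*}{1-|\lambda|^2}=\frac{P(\lambda,\mu_j)P(\lambda,\mu_k)^*}{1-\mu_j\overline{\mu_k}}.
\]
Let $\tilde{P}(\lambda)$ denote the $m\alpha\times mn\alpha$ block-diagonal matrix with diagonal blocks $P(\lambda,\mu_j)$, and let $K_S$ denote the $mn\alpha\times mn\alpha$ block matrix whose $(j,k)$-block is $(1-\mu_j\overline{\mu_k})^{-1}I_{n\alpha}$. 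A direct block computation then shows $G(\lambda)=\tilde{P}(\lambda)\,K_S\,\tilde{P}(\lambda)^*$. The factor $K_S$ is the Kronecker product of the scalar Szeg\H{o} kernel at the distinct points $\mu_j\in\mathbb{D}$ with $I_{n\alpha}$, hence is positive definite, and $\tilde{P}(\lambda)$ has rank $m\alpha$ because each diagonal block has full rank $\alpha$. Therefore $G(\lambda)$ is positive definite, $N(\lambda)N(\lambda)^*=(1-|\lambda|^2)G(\lambda)$ is invertible, and $N(\lambda)^*$ is the sought invertible block matrix.

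The second block matrix is handled by the same argument with the admissibility identity restricted to $\tV\times\tV$: the analogous Gram matrix has denominator $(1/|\lambda|^2)-1>0$ on the $Q$-side and $1/(\overline{\mu_j}\mu_k)-1$ on the $P$-side, and the reciprocal scalar kernel $\overline{\mu_j}\mu_k/(1-\overline{\mu_j}\mu_k)$ is positive definite---it is the congruence of the positive-definite kernel $1/(1-\overline{\mu_j}\mu_k)$ by the invertible diagonal matrix $\operatorname{diag}(\overline{\mu_j})$, which is invertible because each $\mu_j\neq 0$. The only delicate point, and the main bookkeeping obstacle, is tracking signs and conjugations on $\tV$ to ensure positive definiteness of this scalar ingredient; no substantive mathematical obstruction is anticipated.
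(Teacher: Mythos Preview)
Your proof is correct, but it follows a genuinely different path from the paper's. The paper proves Lemma~\ref{lem:Qinvertible} by first establishing Lemma~\ref{lem:PhiQP} via a lurking isometry: it produces a rational matrix inner function $\Phi$ with $\Phi(z)^*Q(z,w)^*=\overline{w}\,Q(z,w)^*$ on $\V$, so that for a generic $\lambda$ the column blocks $Q(\lambda,\mu_j)^*\gamma$ are eigenvectors of $\Phi(\lambda)^*$ corresponding to the $m$ \emph{distinct} eigenvalues $\overline{\mu_j}$; since each $Q(\lambda,\mu_j)$ has rank $\alpha$, these eigenspaces fill out $\mathbb{C}^{m\alpha}$.

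Your argument bypasses $\Phi$ entirely and uses the admissibility relation \eqref{eqn:admissible_pair_def} directly. You compute the Gram matrix $N(\lambda)N(\lambda)^*$, rewrite its blocks on the $P$-side, and factor the result as $\tilde P(\lambda)\,K_S\,\tilde P(\lambda)^*$, where $K_S$ is a Szeg\H{o} Pick matrix tensored with an identity (hence strictly positive definite at distinct nodes) and the block-diagonal $\tilde P(\lambda)$ has full row rank because each $P(\lambda,\mu_j)$ does. This is more elementary and self-contained: you never need the transfer-function machinery, only the Szeg\H{o} kernel's positivity and the full-rank hypothesis on $P$ rather than $Q$. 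The paper's route, by contrast, is more structural --- the function $\Phi$ it constructs is reused later --- but requires the lurking-isometry lemma as a prerequisite. Your treatment of the $\tV$ case is also more explicit than the paper's ``similarly''; the sign-and-conjugation bookkeeping you flag is indeed routine, since $[\,1/(1-\overline{\mu_j}\mu_k)\,]$ is just the Szeg\H{o} kernel at the reflected nodes $\overline{\mu_j}\in\mathbb{D}$ and the diagonal congruence by $\mathrm{diag}(\overline{\mu_j})$ is invertible because you arranged $\mu_j\neq 0$.
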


 The proof of the lemma in turn uses a very modest generalization of
 the construction found in \cite{Kn1} with essentially
 identical proof involving the lurking isometry. See Lemma
 \ref{lem:PhiQP} below.  We record some preliminary observations.

\begin{lemma}
  \label{lem:not-rectangular}
  For each $z\in\mathbb C$ the set $\{w: (z,w)\in \Zp\}$ has
  cardinality at most $m$. Conversely, for all but at most finitely
  many $z$, this set has $m$ elements.
\end{lemma}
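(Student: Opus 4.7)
The plan is to view $p$ as a polynomial in $w$ over $\mathbb{C}[z]$, writing $p(z,w) = \sum_{j=0}^{m} a_j(z) w^j$ with $a_m \not\equiv 0$ by the bidegree assumption, and then to reduce both parts to standard facts about roots of one-variable polynomials.

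For the first assertion, fix $z_0 \in \mathbb{C}$ and observe that $p(z_0, w)$ is a polynomial in $w$ of degree $\le m$; if it is not identically zero, it has at most $m$ roots, which is what we want. The only thing to rule out is $p(z_0, \cdot) \equiv 0$. But if that held, every $a_j(z_0)$ would vanish, so $(z - z_0)$ would divide each $a_j$ and hence $p$ in $\mathbb{C}[z,w]$, forcing the whole line $\{z_0\} \times \mathbb{C}$ into $\Zp$ and contradicting $\Zp \subset \mathbb{D}^2 \cup \mathbb{T}^2 \cup \mathbb{E}^2$ (take any $w$ with $|w|$ on the opposite side of $1$ from $|z_0|$).

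For the converse, $p(z_0, \cdot)$ has exactly $m$ distinct roots precisely when both $a_m(z_0) \ne 0$ and the $w$-discriminant $\Delta(z_0)$ is nonzero. Here $\Delta(z)$, defined up to a unit as the resultant in $w$ of $p$ and $\partial_w p$, is itself a polynomial in $z$. So each of $a_m$ and $\Delta$ has only finitely many zeros, provided neither is identically zero; $a_m \not\equiv 0$ follows from the bidegree assumption, and the exceptional set in the second claim will then be the finite union of the zero sets of $a_m$ and $\Delta$.

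The main step, and the only point that requires thought, is that $\Delta \not\equiv 0$. This is where I would use square-freeness of $p$: if $\Delta \equiv 0$ then $p$ and $\partial_w p$ share a factor of positive $w$-degree in $\mathbb{C}(z)[w]$, so $p$ has a squared irreducible factor in $\mathbb{C}(z)[w]$; clearing denominators via Gauss's lemma — and using the argument from the first paragraph once more to see that $\gcd(a_0, \ldots, a_m) = 1$, so $p$ is primitive in $\mathbb{C}[z][w]$ — this descends to a squared factor of $p$ in $\mathbb{C}[z,w]$, contradicting square-freeness.
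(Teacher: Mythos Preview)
Your proof is correct and follows essentially the same approach as the paper's. Both arguments handle the second claim by showing that $p$ and $\partial_w p$ are coprime (using square-freeness of $p$) and hence have only finitely many common zeros; the paper phrases this directly in terms of common factors and common zeros (implicitly via B\'ezout), while you phrase it via the $w$-resultant/discriminant and are a bit more explicit about the Gauss's-lemma step needed to pass between $\mathbb C(z)[w]$ and $\mathbb C[z,w]$.
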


\begin{proof}
For fixed $z$, the polynomial $q(w) = p(z,w)$ has degree
less than or equal to $m$ in $w$.
Thus, $q(w)$ has at most $m$ zeros or is identically zero.
However, $q$ can't be identically zero since $\Zp \subset
\mathbb{D}^2\cup \mathbb{T}^2 \cup \mathbb{E}^2$.

Conversely, let us prove there are only finitely many $z$ at which
$\{w:(z,w) \in \Zp\} \ne m$.  First, note that there are only finitely
many $z$ at which $q(w) = p(z,w)$ has degree strictly less than $m$
(because the leading coefficient of $q$ is a polynomial in $z$).
Next, using the fact that $p$ is square free, it is not hard to show
that $\partial p/\partial w$ and $p$ have no common factors.
Therefore, $p$ and $\partial p/\partial w$ have finitely
many common zeros.  Thus, as long as we avoid the finitely many $z$ at
which $q(w) = p(z,w)$ has degree less than $m$ and the finitely many
$z$ corresponding to (the first coordinate of) common roots of $p$ and
$\partial p/\partial w$, $q$ will be a polynomial of degree $m$ with
no multiple roots.  This proves the claim.
\end{proof}

 Let $U$ be a unitary matrix of size $(m+n)\alpha$ written in block
 $2\times 2$ form as
\[
  U = \begin{pmatrix} A & B \\ C & D \end{pmatrix}
\]
 with respect to the orthogonal sum 
 $\mathbb C^{m\alpha}\oplus \mathbb C^{n\alpha}$. To
 $U$ we associate the linear fractional, or transfer, function
\[
 \Phi(z) = A^* + C^*(I-zD^*)^{-1}zB^*.
\]
 Very standard calculations show that $\Phi$ is a rational matrix
 function with poles outside $\mathbb D$; is contractive-valued in
 $\mathbb D$; and unitary-valued (except for possibly finitely many
 points) on the boundary of $\mathbb D$.  Indeed, by Cramer's rule the
 entries of $(I-zD^*)^{-1}$ are rational functions of $z$, and since
 $\|D\|\leq 1$ they are analytic in $\mathbb D$.  Moreover, a short
 calculation using the fact that $U$ is unitary shows that
 \begin{equation}
   I-\Phi(z)^*\Phi(z) =(1-|z|^2) B(I-\overline{z}D)^{-1}(I-zD^*)^{-1} B^*
 \end{equation}
which is a positive matrix when $|z|<1$ (showing $\Phi(z)$ is contractive in
$\mathbb D$) and $0$ when $|z|=1$ (showing that $\Phi$ is unitary on
the circle, except at the finitely many points where $(I-zD^*)$ may
fail to be invertible).

 \begin{lemma}
  \label{lem:PhiQP}
    If $(P,Q)$ is an $\alpha$-admissible pair, 
    then there exists an $m\alpha\times m\alpha$ 
    matrix-valued transfer function such that 
  \begin{equation*}
   \Phi(z)^*Q(z,w)^* = w^* Q(z,w)^*
  \end{equation*} 
   for all $(z,w)\in V$
  and for all except finitely many $(z,w)\in \Zp.$
  
   Moreover, $p$ divides $\det(\Phi(z)-wI)$.
 \end{lemma}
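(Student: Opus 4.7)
The plan is to apply the classical ``lurking isometry'' construction to the admissibility identity, then extract $\Phi$ from a unitary extension. Clearing denominators in (\ref{eqn:admissible_pair_def}) rearranges to, on $\Zp\times\Zp$,
\[
Q(z,w)Q(\zeta,\eta)^* + z\overline{\zeta}\,P(z,w)P(\zeta,\eta)^*
 = w\overline{\eta}\,Q(z,w)Q(\zeta,\eta)^* + P(z,w)P(\zeta,\eta)^*.
\]
Setting
\[
u(z,w)=\begin{pmatrix}Q(z,w)^*\\\overline{z}\,P(z,w)^*\end{pmatrix},\qquad
v(z,w)=\begin{pmatrix}\overline{w}\,Q(z,w)^*\\P(z,w)^*\end{pmatrix}
\]
in $\mathbb C^{m\alpha}\oplus\mathbb C^{n\alpha}$, this identity becomes $u(z,w)^*u(\zeta,\eta)=v(z,w)^*v(\zeta,\eta)$, so $u(z,w)\mapsto v(z,w)$ extends to a well-defined linear isometry between the spans of these families. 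Since the domain and range spans live in the same finite-dimensional space $\mathbb C^{(m+n)\alpha}$, their orthogonal complements have equal dimension and the isometry extends to a unitary $U$ on the whole ambient space.

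With $U=\bigl(\begin{smallmatrix}A&B\\C&D\end{smallmatrix}\bigr)$ in block form conforming to $\mathbb C^{m\alpha}\oplus\mathbb C^{n\alpha}$, the equation $Uu(z,w)=v(z,w)$ unpacks to
\begin{align*}
AQ(z,w)^*+\overline{z}\,BP(z,w)^* &= \overline{w}\,Q(z,w)^*,\\
CQ(z,w)^*+\overline{z}\,DP(z,w)^* &= P(z,w)^*.
\end{align*}
Since $\|D\|\le 1$, the matrix $I-\overline{z}D$ is invertible for $|z|<1$ and for cofinitely many $z$ with $|z|\ge 1$; solving the second equation for $P(z,w)^*$ and substituting into the first gives
\[
\bigl[A+\overline{z}\,B(I-\overline{z}D)^{-1}C\bigr]Q(z,w)^* = \overline{w}\,Q(z,w)^*.
\]
The bracketed quantity is exactly $\Phi(z)^*$ for the transfer function $\Phi(z)=A^*+C^*(I-zD^*)^{-1}zB^*$ defined just before the lemma, yielding $\Phi(z)^*Q(z,w)^*=w^*Q(z,w)^*$ on all of $\V$ and on all of $\Zp$ outside a finite set.

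For the divisibility claim, taking adjoints of this relation gives $Q(z,w)\Phi(z)=w\,Q(z,w)$. By Lemma~\ref{lem:not-rectangular}, for all but finitely many $z$ there are $m$ distinct values $w_1(z),\dots,w_m(z)$ with $(z,w_j)\in\Zp$, and by Lemma~\ref{lem:full_rank_ae} we may further assume $Q(z,w_j)$ has full rank $\alpha$ for each $j$. The $\alpha$ rows of $Q(z,w_j)$ then give $\alpha$ linearly independent left eigenvectors of $\Phi(z)$ for eigenvalue $w_j$. Eigenspaces for distinct eigenvalues being independent, these bases together span $\mathbb C^{m\alpha}$, so $\Phi(z)$ is diagonalizable with $\det(wI-\Phi(z))=\prod_{j=1}^m(w-w_j(z))^\alpha$. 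As $p(z,\cdot)$ has roots exactly the $w_j(z)$, it follows that $p(z,w)$ divides $\det(\Phi(z)-wI)$ in $\mathbb C(z)[w]$ (equivalently, as polynomials in $\mathbb C[z,w]$ after clearing a denominator in $z$).

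The only real bookkeeping obstacle is verifying that the isometry extends to a \emph{unitary}, which is free from the dimensional match of the two spans inside a common ambient space; the finitely many $z$ at which $I-\overline{z}D$ is singular or $Q$ fails to have full rank are harmless since both the eigenvector identity and the divisibility claim are determined on Zariski-dense subsets.
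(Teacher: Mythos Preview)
Your lurking isometry argument is essentially identical to the paper's own proof: rearrange the admissibility identity into the Gram form, extend the resulting isometry to a unitary $U$ on $\mathbb C^{(m+n)\alpha}$ by matching codimensions, write $U$ in block form, and solve for $\Phi(z)^*=A+\overline z\,B(I-\overline zD)^{-1}C$. The only cosmetic difference is that the paper carries an explicit vector $\gamma\in\mathbb C^\alpha$ through the construction while you work with the $\alpha$ columns at once, and the paper takes the span over $\V$ rather than over $\Zp$; neither affects the argument.

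You go further than the paper in one respect: the paper states the ``moreover'' divisibility $p\mid\det(\Phi(z)-wI)$ but does not actually prove it in the proof block. Your argument for this---using Lemma~\ref{lem:not-rectangular} to find $m$ distinct fiber points for generic $z$ and Lemma~\ref{lem:full_rank_ae} to ensure $Q(z,w_j)$ has full rank there, so that the rows of the $Q(z,w_j)$ form a complete set of left eigenvectors of $\Phi(z)$---is correct and is exactly the kind of reasoning needed. Your final hedge about divisibility in $\mathbb C(z)[w]$ is appropriate, since $\Phi$ is rational rather than polynomial in $z$.
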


\begin{proof}  
    First, we rearrange the relation
\[
\frac{Q(z,w)Q(\zeta,\eta)^*}{1-z\bar{\zeta}} =
\frac{P(z,w)P(\zeta,\eta)^*}{1-w\bar{\eta}}
\]
for $(z,w), (\zeta,\eta) \in \V$ into the isometric form
   \begin{equation}
    \label{eq:lurk}
      QQ^* + z\zetastar PP^* = w\etastar QQ^* +PP^*.
   \end{equation}
     Let
   \begin{equation*}
    \label{eq:EEs}
     \begin{split}
      \mathcal E=&\text{span}\{\begin{pmatrix} Q(\zeta,\eta)^* \gamma \\
            \zetastar P(\zeta,\eta)^*\gamma \end{pmatrix}: (\zeta,\eta)\in \V, 
                \gamma\in \mathbb C^\alpha \} \\
     \mathcal F=&\text{span} \{\begin{pmatrix} \etastar  Q(\zeta,\eta)^* \gamma \\
               P(\zeta,\eta)^*\gamma\end{pmatrix}: (\zeta,\eta)\in \V, 
                \gamma\in \mathbb C^\alpha \}.
     \end{split}
   \end{equation*}
 Equation \eqref{eq:lurk} implies that the mapping
   \begin{equation}
    \label{eq:defV}
      U \begin{pmatrix} Q(\zeta,\eta)^* \gamma \\
            \zetastar P(\zeta,\eta)^*\gamma \end{pmatrix}
             = \begin{pmatrix} \etastar  Q(\zeta,\eta)^* \gamma \\
               P(\zeta,\eta)^*\gamma\end{pmatrix}
   \end{equation}
   determines a well defined isometry $U:\mathcal E\to\mathcal F$.
   Since we are in finite dimensions, it follows that $U$ extends to a
   unitary (which we still denote $U$) on $\mathbb
   C^{(m+n)\alpha}$. Write
  \begin{equation*}
    U=\begin{pmatrix} A & B \\ C & D \end{pmatrix}:
     \begin{matrix}\mathbb C^{m\alpha} \\ \oplus\\ \mathbb C^{n\alpha}\end{matrix}
     \to \begin{matrix}\mathbb C^{m\alpha} \\ \oplus\\ \mathbb C^{n\alpha} \end{matrix}
  \end{equation*}
   and define
  \begin{equation*}
   \Phi(\zeta)^*= A+\zetastar B(I-\zetastar D)^{-1}C.
  \end{equation*}

By definition of $U$,
\[
\begin{aligned}
A Q(\zeta,\eta)^*\gamma + B \zetastar P(\zeta,\eta)^*\gamma & =
\etastar Q(\zeta,\eta)^*\gamma \\
C Q(\zeta,\eta)^*\gamma + D \zetastar P(\zeta,\eta)^*\gamma &=
P(\zeta,\eta)^*\gamma
\end{aligned}
\]
which implies
\[
(I-\zetastar D)^{-1} C Q(\zeta,\eta)^* \gamma = P(\zeta,\eta)^* \gamma 
\]
and therefore
\[
A Q(\zeta,\eta)^*\gamma + \zetastar B (I-\zetastar D)^{-1}C
Q(\zeta,\eta)^*\gamma = \Phi(\zeta)^* Q(\zeta,\eta)^*\gamma = \etastar 
Q(\zeta,\eta)^* \gamma.
\]
for all $\gamma \in \mathbb{C}^{\alpha}$.  So, we indeed have
\[
\Phi(z)^* Q(z,w)^* = w^* Q(z,w)^*
\]
everywhere on $\Zp$, excluding the finitely many points $(z,w)$ where
$\Phi(z)$ may not be defined. 
\end{proof}


Note in passing that we also have:
 \begin{equation*}
  \begin{split}
    \frac{1}{\zetastar}P^*=&[D+C(\etastar -A)^{-1}B]P^* \\
    \frac{1}{\etastar } Q(\zeta,\eta)^*=& [A^* + C^*(\zetastar -D^*)^{-1}B^*]Q^* \\
    \zetastar P^* = & [D^* +\etastar  (I-\etastar A^*)^{-1} C^*]P^*.    
  \end{split}
 \end{equation*}

\begin{proof} [Proof of Lemma \ref{lem:Qinvertible}]
Choose $\lambda\in \mathbb D$ such that 
\begin{enumerate}[(i)]
\item there are $m$ {\it distinct} points $\mu_1,\dots,\mu_m\in
  \mathbb D$ such that $(\lambda,\mu_j) \in \V$, and
 \item for each $j=1,\dots m$, the matrices $Q(\lambda, \mu_j)$ and
  $Q(\frac{1}{\lambdastar},\frac{1}{\mustar_j})$ have rank $\alpha$.
\end{enumerate}  
(This is possible by combining Lemmas~\ref{lem:not-rectangular} and
\ref{lem:full_rank_ae}.)  Let $\Phi$ denote the rational function from
Lemma \ref{lem:PhiQP}. For $\gamma\in\mathbb C^{\alpha}$,
 \begin{equation*}
   \Phi(\lambda)^*Q(\lambda,\mu_j)^*\gamma 
       = \mustar_j Q(\lambda,\mu_j)^*\gamma.
 \end{equation*}   
   Thus $Q(\lambda,\mu_j)^*\gamma$ is in the eigenspace of
   $\Phi(\lambda)^*$ corresponding to the eigenvalue $\mustar_j$, and
   for each $j$ this eigenspace has dimension $\alpha$. It follows
   that the matrix $(Q(\lambda,\mu_j)^*)_j$ has rank $m\alpha$;
   similarly for $(Q(\frac{1}{\lambdastar},\frac{1}{\mustar_j})^*)_j$.
\end{proof}

\section{Bundle Shifts}
 \label{sec:shifts}
Recall that $(S,T) = (M_z,M_w)$ on $H^2(K)$ and $(\tS, \tT) =
(M_{1/z}, M_{1/w})$ on $H^2(\tK)$. 
 See Definition \ref{def:Ks}.

\begin{theorem}
 \label{thm:isometries-more}
    Suppose $(P,Q)$ is a rank $\alpha$-admissible pair.
    The operators $S,\tS,T,\tT$ are all pure isometries.
    Moreover, the Taylor spectra of $(S,T)$ and $(\tS^*,\tT^*)$ are
    contained in $\Zp \cap \overline{\mathbb D^2}$.
\end{theorem}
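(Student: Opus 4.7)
My plan is to realize $(S, T)$ on $H^2(K)$ as unitarily equivalent to a standard shift pair on a vector-valued Hardy space via an isometric embedding associated with the admissible pair $(P, Q)$. First, I would define $V : H^2(K) \to H^2(\mathbb{D}, \mathbb{C}^{m\alpha})$ on kernel sections by
\[
V\bigl(K_{(\zeta,\eta)}\gamma\bigr) = k^{\mathbb{D}}_\zeta \otimes Q(\zeta,\eta)^*\gamma,
\]
where $k^{\mathbb{D}}_\zeta(z) = (1 - z\overline{\zeta})^{-1}$ is the Szeg\H{o} kernel. The admissibility relation $K = QQ^*/(1 - z\overline{\zeta})$ immediately shows $V$ preserves inner products on the dense set of sections, so it extends to an isometry; and the identity $M_z^* K_{(\zeta,\eta)}\gamma = \overline{\zeta} K_{(\zeta,\eta)}\gamma$ shows $V$ intertwines $M_z^*$ on $H^2(K)$ with the backward shift on $H^2(\mathbb{D}, \mathbb{C}^{m\alpha})$.

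The crucial step is to show $V$ is in fact surjective. By Lemma~\ref{lem:Qinvertible}, for all but finitely many $\zeta \in \mathbb{D}$ there exist distinct $\mu_1,\ldots,\mu_m$ with $(\zeta,\mu_j) \in \V$ such that the $m\alpha \times m\alpha$ block matrix $\bigl(Q(\zeta,\mu_j)^*\bigr)_{j=1}^m$ is invertible; hence $\{Q(\zeta,\mu_j)^*\gamma : 1 \le j \le m,\ \gamma \in \mathbb{C}^\alpha\}$ spans $\mathbb{C}^{m\alpha}$, which forces $k^{\mathbb{D}}_\zeta \otimes v$ to lie in the image of $V$ for every $v \in \mathbb{C}^{m\alpha}$ and every such $\zeta$. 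Since $\{k^{\mathbb{D}}_\zeta\}$ remains total in $H^2(\mathbb{D})$ after the removal of any finite set, the image of $V$ is all of $H^2(\mathbb{D}, \mathbb{C}^{m\alpha})$. Thus $V$ is unitary, and $S = M_z$ is unitarily equivalent to the multiplicity-$m\alpha$ shift on $H^2(\mathbb{D}, \mathbb{C}^{m\alpha})$, hence a pure isometry. The symmetric argument using the second representation $K = PP^*/(1 - w\overline{\eta})$ handles $T$, and analogous constructions using the factorizations in Remark~\ref{rem:tK} give the conclusion for $\tS, \tT$.

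For the Taylor spectra: the pair $(S, T)$ consists of commuting contractions, so $\sigma_T(S, T) \subseteq \overline{\mathbb{D}^2}$, and since $p$ vanishes on $\V$ we have $p(S, T) = M_{p(z,w)} = 0$, which by the spectral mapping property for Taylor spectrum forces $\sigma_T(S, T) \subseteq \Zp \cap \overline{\mathbb{D}^2}$. For $(\tS^*, \tT^*)$: the reciprocal polynomial $\tilde p(u,v) := u^n v^m p(1/u, 1/v)$ is genuinely a polynomial by the bidegree constraint on $p$, and since $z, w$ are nonvanishing on $\tV$ the identity $p(z,w) = 0$ rearranges to $\tilde p(\tS, \tT) = 0$. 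Taking adjoints and using the palindrome condition $c_{ij} = \overline{c_{n-i, m-j}}$ on the coefficients of $p$ forced by (\ref{eq:Zp-symmetry}), the resulting polynomial identity collapses back to $p(\tS^*, \tT^*) = 0$; combined with the contractivity of $\tS^*, \tT^*$ this yields $\sigma_T(\tS^*, \tT^*) \subseteq \Zp \cap \overline{\mathbb{D}^2}$.

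The main obstacle is the surjectivity of $V$: it is what upgrades the a priori coinvariant embedding into a unitary equivalence, and it rests on combining the invertible block matrix produced by Lemma~\ref{lem:Qinvertible} with the totality of reproducing kernel evaluations off a finite set; once $V$ is known to be unitary, the remaining assertions reduce to standard spectral mapping arguments.
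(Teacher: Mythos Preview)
Your proof is correct, and your route to the pure-isometry claim is genuinely different from the paper's.

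The paper argues intrinsically in $H^2(K)$: it introduces the finite-dimensional subspace $\cQ$ spanned by $\{Q(\cdot)Q(\zeta,\eta)^*\gamma\}$, proves (via a kernel-inequality argument using Lemma~\ref{lem:Qinvertible}) that $\cQ$ has reproducing kernel $K_Q = QQ^*$, derives the orthogonal decomposition $K(\cdot,(\zeta,\eta)) = P_{\cQ}K(\cdot,(\zeta,\eta)) + \overline{\zeta}\,S K(\cdot,(\zeta,\eta))$, and from that reads off that $S$ is isometric with $\ker S^* = \cQ$; purity is then checked separately via $S^{*j}K_{(\zeta,\eta)}\gamma = \overline{\zeta}^{\,j}K_{(\zeta,\eta)}\gamma \to 0$ and density.

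Your approach instead builds the model $V:H^2(K)\to H^2(\mathbb D,\mathbb C^{m\alpha})$ directly on kernel sections and uses Lemma~\ref{lem:Qinvertible} to prove surjectivity. This is cleaner: one step gives isometry, purity, and the multiplicity simultaneously, and it is exactly the identification the paper invokes later (at the start of \S\ref{subsec:dilation-with-relations}) without writing it down explicitly. The paper's route, on the other hand, yields along the way the concrete description $\ker S^* = \cQ$ (Proposition~\ref{prop:kerS-star}), which it uses in the geometric construction of the unitary extension; your model gives the same information as $V^{-1}(\mathbb C^{m\alpha})$, but the paper's intrinsic description is what feeds into the operators $\Sigma,\Gamma$ of Theorem~\ref{thm:dilation-with-relations}.

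For the Taylor spectrum of $(\tS^*,\tT^*)$ your reciprocal-polynomial argument is equivalent to the paper's: it defines $\check p(z,w)=\overline{p(\bar z,\bar w)}$, checks $\check p(\tS,\tT)=0$ directly on kernels, and then observes this is the same as $p(\tS^*,\tT^*)=0$. Your derivation via $\tilde p(u,v)=u^n v^m p(1/u,1/v)$ together with the palindrome relation $c_{ij}=\overline{c_{n-i,m-j}}$ from \eqref{eq:Zp-symmetry} unwinds to the same identity.
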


  That $p(S, T)=0$ is immediate, since $p(S, T) =M_p$ and 
\begin{equation*}
 M_p^* K_{(z,w)} =\overline{p(z,w)}K_{(z,w)} =0
\end{equation*}
  for all $(z,w)\in \V$.  For the claim about the Taylor spectrum, 
  note that since both $S$ and $T$ are contractions,
  $\sigma_{Tay}(S,T)\subset \overline{\mathbb D^2}$. Further, 
  by Taylor's mapping theorem we have
\begin{equation*}
 \{0\}=\sigma(p(S, T)) =p(\sigma_{Tay}(S, T)) 
\end{equation*}
 so $\sigma_{Tay}(S, T) \subset \Zp \cap \overline{\mathbb D^2}$.  

  Let $\check{p}(z,w)=\overline{p(\oz,\ow)}$. Thus $\check{p}$ is obtained
  from $p$ by conjugating the coefficients of $p$.  A computation
  like that above gives 
\begin{equation*}
 \begin{split}
  \check{p}(\tS,\tT)^* \tK_{(\zeta,\eta)}
    = & \overline{\check{p}(\frac{1}{\zeta},\frac{1}{\eta})} \tK_{(\zeta,\eta)}\\
    =& p(\ts{\zeta},\ts{\eta})\tK_{(\zeta,\eta)} \\
    =& 0.
 \end{split}
\end{equation*}
   Thus $\check{p}(\tS,\tT)=0$. Equivalently, $p(\tS^*,\tT^*)=0$. 

  Our proof that $S$ is an isometry begins with a lemma.
  From the general theory of reproducing kernel Hilbert spaces
  (see \cite{AMbook} for instance),
  if $K$ is an $\alpha\times\alpha$ 
  matrix-valued  kernel, $g$ is a $\mathbb C^\alpha$-valued
  function on $\V,$ and
 \[
    K((z,w),(\zeta,\eta)) - g(z,w)g(\zeta,\eta)^*
 \]
  is positive semi-definite, then $g \in H^2(K)$, so in particular
 \[
     \langle  g,K_{(\zeta,\eta)}\gamma \rangle
          =\langle g(\zeta,\eta),\gamma\rangle_{\mathbb C^\alpha}.
 \]
   Given the admissible pair $(P,Q)$, let
 \[
    \mQ = Q(z,w)Q(\zeta,\eta)^*.
 \]
   It is immediate that $K-\mQ$ is a positive kernel. Hence,
   if $\gamma$ is a unit vector, then 
 \[
   K - Q(z,w)\gamma \gamma^*Q(\zeta,\eta)^* 
 \]
  is also a positive kernel and thus $Q(z,w)\gamma \in H^2(K)$.  The
  next lemma develops this observation further.
\begin{lemma}
 \label{lem:QQ-star} 
   Let $\mQ((z,w),(\zeta,\eta))=Q(z,w)Q(\zeta,\eta)^*$ and
   let $\cQ$ denote the span of $\{\mQ(\cdot,(\zeta,\eta))\gamma: 
    (\zeta,\eta)\in \V, \ \gamma\in\mathbb C^\alpha \}$.
    Then $\mQ$ is the reproducing kernel for $\cQ$ with
   respect to the inner product on $H^2(K)$; i.e.,
   $\mQ\in \cQ$ and 
   if $g\in \cQ$, then
 \begin{equation*}
    g(\zeta,\eta) = \langle g, \mQ(\cdot,(\zeta,\eta) \rangle.
 \end{equation*}
   Thus,
 \begin{equation*}
    P_{\cQ}K(\cdot,(\zeta,\eta))= \mQ(\cdot,(\zeta,\eta))
 \end{equation*}
   and
 \begin{equation*}
   K(\cdot,(\zeta,\eta))-P_{\cQ}K(\cdot,(\zeta,\eta))
    =\overline{\zeta} S K(\cdot,(\zeta,\eta)).
 \end{equation*}
\end{lemma}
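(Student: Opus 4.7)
The proof hinges on the admissibility identity $(1-z\overline{\zeta})K((z,w),(\zeta,\eta)) = Q(z,w)Q(\zeta,\eta)^*$ on $\V\times\V$, which, together with the elementary fact $S^* K(\cdot,(\zeta,\eta))\gamma = \overline{\zeta}\,K(\cdot,(\zeta,\eta))\gamma$, rearranges into the fundamental $H^2(K)$-identity
\[
  \mQ(\cdot,(\zeta,\eta))\gamma \;=\; K(\cdot,(\zeta,\eta))\gamma \;-\; \overline{\zeta}\, S\, K(\cdot,(\zeta,\eta))\gamma.
\]
First, since $K-\mQ = z\overline{\zeta}K$ is a positive kernel (Schur product of the positive kernel $z\overline{\zeta}$ with $K$), one has $\mQ \preceq K$; by the RKHS inclusion principle recalled just before the lemma, each $\mQ(\cdot,(\zeta,\eta))\gamma$ lies in $H^2(K)$, and hence $\cQ \subseteq H^2(K)$.

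Next, to verify that $\mQ$ serves as the reproducing kernel of $\cQ$ with respect to the $H^2(K)$ inner product, I would check the reproducing formula on spanning elements. For $g = \mQ(\cdot,(\zeta',\eta'))\gamma' \in \cQ$, the one-sided reproducing property of $K$ applied to $g \in H^2(K)$ gives $\langle g, K(\cdot,(\zeta,\eta))\gamma\rangle = \langle g(\zeta,\eta),\gamma\rangle = \gamma^*\mQ((\zeta,\eta),(\zeta',\eta'))\gamma'$. Substituting the displayed identity for $\mQ(\cdot,(\zeta,\eta))\gamma$ reduces the desired formula $\langle g, \mQ(\cdot,(\zeta,\eta))\gamma\rangle = \gamma^*\mQ((\zeta,\eta),(\zeta',\eta'))\gamma'$ to the vanishing of the extra term $\overline{\zeta}\langle g, S\,K(\cdot,(\zeta,\eta))\gamma\rangle$, i.e., to the inclusion $\cQ \subseteq \ker S^*$. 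A direct computation using the displayed identity shows $S^*\mQ(\cdot,(\zeta',\eta'))\gamma' = \overline{\zeta'}(I - S^*S)K(\cdot,(\zeta',\eta'))\gamma'$, so this vanishing is precisely the assertion that $S$ acts isometrically on kernel sections --- exactly the content of Theorem~\ref{thm:isometries-more}(i) that this lemma is designed to prepare.

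With the reproducing-kernel property established, the projection formula $P_\cQ K(\cdot,(\zeta,\eta)) = \mQ(\cdot,(\zeta,\eta))$ is standard: $\mQ(\cdot,(\zeta,\eta))\gamma \in \cQ$ by construction, and both $P_\cQ K(\cdot,(\zeta,\eta))\gamma$ and $\mQ(\cdot,(\zeta,\eta))\gamma$ pair with any $f \in \cQ$ to yield $\langle f(\zeta,\eta),\gamma\rangle$, forcing their equality. The final identity $K(\cdot,(\zeta,\eta)) - P_\cQ K(\cdot,(\zeta,\eta)) = \overline{\zeta}\,S\,K(\cdot,(\zeta,\eta))$ then drops out by subtraction from the displayed identity. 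The hard step, as expected, is the orthogonality $\cQ \perp S\,H^2(K)$, equivalent to $S$ being isometric on kernel sections. I expect to handle this either by leveraging the geometric-series expansion $K = \sum_{n\geq 0}(z\overline{\zeta})^n\mQ$ together with an Aronszajn-type direct-sum uniqueness argument yielding a Wold-type decomposition $H^2(K) = \bigoplus_n S^n\cQ$ with $\cQ$ as a wandering subspace, or by invoking the parallel admissibility identity $(1-w\overline{\eta})K = PP^*$ to bring the second coordinate into the bookkeeping --- either route delivers the orthogonality and the isometry of $S$ in one stroke.
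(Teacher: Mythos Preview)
Your proposal has a genuine gap, and you yourself identify it: the entire argument reduces to proving $\cQ \perp S\,H^2(K)$, which you correctly note is equivalent to $S$ being isometric on kernel sections. But the lemma is precisely what the paper uses to \emph{prove} that $S$ is an isometry (see the paragraph immediately following the statement of the lemma), so invoking Theorem~\ref{thm:isometries-more}(i) here would be circular. Your two proposed workarounds --- the Aronszajn direct-sum expansion $K=\sum_n (z\overline{\zeta})^n \mQ$, or bringing in the $P$-identity --- are not fleshed out, and neither obviously delivers the orthogonality without already knowing the $H^2(K)$-norm on $\cQ$ agrees with the $\mQ$-norm. In particular, Aronszajn's sum theorem gives that $H^2(K)$ contains the sum of the pieces, but does not by itself give orthogonality of the summands in the $H^2(K)$ inner product.

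The paper's route is quite different and avoids the circularity entirely. It reframes the claim as an equality of two Hilbert-space norms on the finite-dimensional space $\cQ$: the one restricted from $H^2(K)$ and the one coming from $\mQ$ itself. This is equivalent to the pair of kernel implications $K\succeq \mQ$ (easy) and, for any $g=Q(\cdot)\gamma\in\cQ$, $K\succeq gg^* \Rightarrow \mQ\succeq gg^*$ (the hard step). For the hard step, the paper evaluates the positivity $K-gg^*\succeq 0$ at the special points $(\lambda,\mu_1),\dots,(\lambda,\mu_m)$ furnished by Lemma~\ref{lem:Qinvertible}, where the stacked matrix $(Q(\lambda,\mu_j)^*)_j$ is invertible. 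This invertibility strips off the $Q$'s and yields $\frac{1}{1-|\lambda|^2}I_{m\alpha}-\gamma\gamma^*\ge 0$; letting $\lambda\to 0$ gives $I_{m\alpha}-\gamma\gamma^*\ge 0$, hence $\mQ\succeq gg^*$. This is the missing idea in your argument: the finite-dimensional invertibility from Lemma~\ref{lem:Qinvertible} is what breaks the circularity and establishes the norm equality directly, after which the isometry of $S$ follows as a consequence rather than being assumed.
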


  Let $\mathcal D=\{K_{(\zeta,\eta)} : (\zeta,\eta)\in \V, \, \zeta\ne 0\}.$
  The last identity in the Lemma implies 
  the set $S\mathcal D$
  is orthogonal to $\cQ$.  Since the span of $\mathcal D$ is dense
  in $H^2(K)$, it follows that 
  $SH^2(K)$ 
  is orthogonal to $\cQ$. 
  Using the Lemma and the fact that for $f\in H^2(K)$ and 
  $(\zeta,\eta)\in \V$,
\begin{equation*}
 \begin{split}
  \langle Sf, K(\zeta,\eta) \rangle
   = & \langle Sf, \overline{\zeta} S K(\cdot,(\zeta,\eta))\rangle
      + \langle Sf, P_{\cQ}K(\cdot,(\zeta,\eta)) \rangle \\
   = & \langle Sf, \overline{\zeta} S K(\cdot,(\zeta,\eta))\rangle.
 \end{split}
\end{equation*}
  Consequently,
\begin{equation*}
 \begin{split}
   \zeta \langle f, K(\cdot,(\zeta,\eta)) \rangle
   = & \zeta f(\zeta,\eta) = \langle Sf, 
          K(\cdot,(\zeta,\eta))\rangle \\
   = & \langle Sf, \overline{\zeta} SK(\cdot,(\zeta,\eta))\rangle.
 \end{split}
\end{equation*}
  It follows that $S$ is an isometry.

\begin{proof}[Proof of Lemma \ref{lem:QQ-star}]
The claim of the lemma may be understood as follows: the finite-dimensional space $\cQ$ can be made into a reproducing kernel Hilbert space in two ways. On the one hand, since $\cQ\subset H^2(K)$ we can simply restrict the norm from $H^2(K)$. On the other hand, we can define the kernel $\mQ$ as in the statement of the lemma and give $\cQ$ the norm coming from the resulting inner product.  The kernel in the first case is $P_{\cQ}K$, and the kernel in the second case is of course $\mQ$.  The lemma says that the two kernels are in fact equal, which is equivalent to saying that the associated Hilbert space norms are equal (since the norm determines inner product and the inner product determines the kernel).  To prove this, it suffices to prove that the identity map of $\cQ$ is contractive in both directions.  This may be proved by inspecting the kernels: the identity is contractive from the $H^2(\mQ)$ norm to the (restricted) $H^2(K)$ norm if and only if 
\begin{equation}
 \label{eq:KKQ}
     K \succeq \mQ ,
\end{equation}
while the map is contractive from the $H^2(K)$ norm to the $H^2(\mQ)$ norm if and only if for any $g \in \mathcal{Q}$
\begin{equation}
 \label{eq:gKQ}
 K \succeq gg^* \text{ implies } \mQ \succeq gg^*.
\end{equation}
(Recall from subsection \ref{subsubsec:H-infty} that
``$\succeq$'' represents an inequality in the sense of positive
semi-definite kernels; e.g. \eqref{eq:KKQ} says $K-\mQ$ is a positive
semi-definite kernel.)  These equivalences follow from the fact that for any reproducing kernel Hilbert space on a set $X$ with kernel $L$, a function $f:X\to \mathbb C$ lies in the unit ball of $H^2(L)$ if and only if $L\succeq ff^*$.


We may now begin the proof of the lemma.  First, it is clear that $K \succeq \mQ$ since
\[
 K-\mQ  = \zetastar SK.
\]

For the other direction, suppose 
\[
  g = \sum_{j=1}^{m} Q_j(z,w) \gamma_j = Q(z,w)\gamma 
\]
 is in $\mathcal{Q}$
 where   $\gamma = (\gamma_1 \dots \gamma_m)^t \in\mathbb{C}^{m\alpha}$ 
 and $K-gg^*\succeq 0$. 
 The inequality $K-gg^*\succeq 0$ is equivalent to positive semi-definiteness
 of the kernel
\begin{equation}
 \label{eq:QQ1}
  L(z,w)=Q(z,w)[\frac{1}{1-z\overline{\zeta}}I_{m\alpha} - \gamma\gamma^*] Q(\zeta,\eta)^*.
\end{equation}
    
  Fix $\lambda$ satisfying the conditions 
  of Lemma \ref{lem:Qinvertible}. In particular, 
  there are $m$ distinct points
  $\mu_1,\dots,\mu_m\in\mathbb D$ such that 
 $(\lambda,\mu_1), \dots, (\lambda,\mu_m) \in \V\cap \mathbb{D}^2.$ 
  The block matrix 
\[
 ( L((\lambda,\mu_j),(\lambda,\mu_k)) )_{j,k}
   = Z [\frac{1}{1-|\lambda|^2} - \gamma\gamma^*] Z^*
\]
  is positive definite, where 
\[
  Z^* = \begin{pmatrix} Q(\lambda,\mu_1)^* & \dots & Q(\lambda,\mu_m)^*
   \end{pmatrix}
\]
  (see  Lemma \ref{lem:Qinvertible}).  Since $Z$ is invertible,
  it follows that 
\[
  \left( \frac{1}{1-|\lambda|^2}I_{m\alpha} - \gamma\gamma^* \right) \ge 0.
\]
  Since, by Lemma \ref{lem:Qinvertible}, there is a sequence
  $\lambda_n\to 0$ for which this last inequality holds,
\[
  \left(I_{m\alpha} - \gamma\gamma^* \right) \ge 0.
\]

  It  now follows that 
\[
Q(z,w) (I_{m\alpha} - \gamma\gamma^*) Q(\zeta,\eta)^* \succeq 0.
\]
 This last inequality is equivalent to
\[
\mQ \succeq gg^*.
\]
  Thus, we have proved that $\mQ$ is the reproducing
  kernel for $\cQ$.    Since $P_{\cQ}K(\cdot,(\zeta,\eta))$ is also the
  reproducing kernel for $\cQ$, the second identity in the
  lemma follows.
The last statement follows from the identity
 \begin{equation*}
   K((z,w),(\zeta,\eta))-\mQ((z,w),(\zeta,\eta))
      = z\overline{\zeta} K((z,w),(\zeta,\eta)).
 \end{equation*}
\end{proof}

  To see that $S$ is pure, note that for any given $(\zeta,\eta)$
  and vector $\gamma$, 
 \begin{equation*}
    S^{*j} k(\cdot,(\zeta,\eta))\gamma = 
     \overline{\zeta}^j k(\cdot,(\zeta,\eta))\gamma.
 \end{equation*}
   Thus $S^{*j}f$ converges to $0$ for each $f$ 
   in the span of $\{K(\cdot,(\zeta,\eta))\gamma: (\zeta,\eta)\in \V, \ 
    \gamma \in \mathbb C^\alpha \}$. Since this set 
  is dense in $H^2(K)$ and since the sequence $S^{*j}$ is
  norm bounded, it follows that $S^{*j}$ converges to $0$
  in the SOT. Consequently, $S$ is a pure shift.

  Similar arguments show that $\tS$, $T$, and $\tT$ 
  are also pure isometries.  The following proposition
  identifies their defect spaces.

\begin{proposition}
 \label{prop:kerS-star}
   The kernel of $S^*$ is $\cQ$. Moreover,  
   if $\lambda,\mu_1,\dots,\mu_m$ satisfy the conditions
   of Lemma \ref{lem:Qinvertible}, then $\cQ$ is
  equal to the span of $\{ Q(\cdot)Q(\lambda,\mu_j)^*\gamma:
    \gamma\in\mathbb C^\alpha, \ 1\le j\le m\}.$
\end{proposition}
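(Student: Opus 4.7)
The plan is to derive the first claim from the identity in Lemma \ref{lem:QQ-star} combined with the orthogonality $SH^2(K) \perp \cQ$ already noted after that lemma; the second claim then follows quickly from the invertibility statement in Lemma \ref{lem:Qinvertible}.

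For the identification $\ker S^* = \cQ$, since $S$ is already known to be an isometry, $SH^2(K)$ is closed and $\ker S^* = (SH^2(K))^\perp$, so it suffices to show $\cQ^\perp = SH^2(K)$. The inclusion $SH^2(K) \subseteq \cQ^\perp$ was observed in the discussion immediately following Lemma \ref{lem:QQ-star}. For the reverse inclusion, I would apply the identity
\[
K(\cdot,(\zeta,\eta)) - P_\cQ K(\cdot,(\zeta,\eta)) = \overline{\zeta}\, S K(\cdot,(\zeta,\eta))
\]
from Lemma \ref{lem:QQ-star}: it shows that on every kernel vector $K(\cdot,(\zeta,\eta))\gamma$ the orthogonal projection onto $\cQ^\perp$ lands in $SH^2(K)$. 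Extending by linearity to the span of kernel vectors, which is dense in $H^2(K)$, and using that $I-P_\cQ$ is bounded and $SH^2(K)$ is closed, yields $\cQ^\perp = (I-P_\cQ)H^2(K) \subseteq SH^2(K)$, completing the equality.

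For the spanning statement, every element of $\cQ$ has the form $Q(\cdot)v$ for some $v$ in the subspace $V := \text{span}\{Q(\zeta,\eta)^*\gamma : (\zeta,\eta) \in \V,\, \gamma \in \mathbb C^\alpha\} \subseteq \mathbb C^{m\alpha}$. By Lemma \ref{lem:Qinvertible}, for an appropriate choice of $\lambda,\mu_1,\dots,\mu_m$ the block matrix with columns $Q(\lambda,\mu_j)^*$ is invertible, so the vectors $\{Q(\lambda,\mu_j)^*\gamma : \gamma \in \mathbb C^\alpha,\, 1\le j\le m\}$ span all of $\mathbb C^{m\alpha}$, and in particular span $V$. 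Hence every element of $\cQ$ is a linear combination of the functions $Q(\cdot)Q(\lambda,\mu_j)^*\gamma$, which gives the claimed description.

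The step that needs the most care is the density/continuity argument in the first part: one must invoke the isometric property of $S$ to know that $SH^2(K)$ is closed, and the boundedness of $I - P_\cQ$, in order to pass the identity from the dense set of kernel vectors to all of $H^2(K)$. Everything else reduces to a direct application of the preceding two lemmas.
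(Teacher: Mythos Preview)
Your proof is correct and follows essentially the same route as the paper. For the first assertion the paper argues that $\cQ\subseteq\ker S^*$ (from $\cQ\perp SH^2(K)$) and $\cQ+SH^2(K)=H^2(K)$ (from the same kernel identity you cite), which together with $S$ isometric forces equality; your reformulation $\cQ^\perp=SH^2(K)$ is just the contrapositive of the same two facts. For the ``moreover'' clause the paper uses a dimension count ($\dim\cQ\le m\alpha$ while the indicated set spans an $m\alpha$-dimensional subspace), whereas you argue directly that the $Q(\lambda,\mu_j)^*\gamma$ already span all of $\mathbb C^{m\alpha}$ and hence every $Q(\cdot)v\in\cQ$; both rest on the invertibility from Lemma~\ref{lem:Qinvertible} and are equally short.
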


\begin{proof}
  As noted already, the subspace $\cQ$ is orthogonal to $SH^2(K)$.
  Hence $\cQ$ is a subspace of the kernel of $S^*$. 
  On the other hand, $\cQ+SH^2(K)=H^2(K)$, since
 \begin{equation*}
  K(\cdot,(\zeta,\eta))= \mQ(\cdot,(\zeta,\eta)) +
     \overline{\zeta}SK(\cdot,(\zeta,\eta))
 \end{equation*}
  and the first conclusion of the lemma follows.

  The dimension of $\cQ$ is
  at most $m\alpha$. On the other hand,
  under the hypothesis of the
  moreover part of the lemma the span of
  $\{ Q(\cdot)Q(\lambda,\mu_j)^*\gamma\}$ has
  dimension $m\alpha$.  
\end{proof}

\subsection{Proof of Theorem \ref{thm:dilation-with-relations}}
 \label{subsec:dilation-with-relations}
 In this subsection we give a proof of 
 Theorem \ref{thm:dilation-with-relations} based upon
 knowledge of the commutant of a pure shift. We
 sketch a second geometric proof 
 which identifies the extension in terms of the
 operators $\tS$ and $\tT$ canonically associated
 to $S$ and $T$ via the reflected kernel $\tK$. 

  The pure shift $S$ has multiplicity $m\alpha$ 
  and thus  can be modeled as multiplication by the coordinate function on a
vector valued Hardy space $H^2 \otimes \mathbb{C}^{m\alpha}.$
 Since $T$ commutes with $S$ and is itself a pure isometry
 of multiplicity $n\alpha$, 
 it is multiplication by a matrix valued rational inner function,
say $\Phi$, on $H^2\otimes \mathbb{C}^{m\alpha}$.  Therefore, the pair
$(S,T)$ can be thought of as the pair
\[
(M_z, M_\Phi) : H^2\otimes \mathbb{C}^{m\alpha} \to H^2\otimes
\mathbb{C}^{m\alpha}.
\]
We will necessarily have
\[
 p(zI,\Phi(z)) = 0
\]
since $p(M_z,M_w) = 0$.

This pair extends to a pair of unitary multiplication
operators on $L^2\otimes \mathbb{C}^{m\alpha}$.  The resulting pair of
unitaries will still satisfy the polynomial $p$ which defines the
distinguished variety in question.

 Next we sketch our geometric proof. 
 As in the proof of Lemma \ref{lem:QQ-star}, let 
 $\mQ(\cdot,(\zeta,\eta))=Q(\cdot)Q(\zeta,\eta)^*$
 and let $\cQ$ denote the span of
 $\{ \mQ(\cdot,(\zeta,\eta))\gamma:(\zeta,\eta)\in \V, \ \gamma
   \in \mathbb C^\alpha \}$.
 If $g\in \cQ$, then
\[
 \langle g,\mQ(\cdot,(\zeta,\eta))\gamma\rangle
   = \langle g(\zeta,\eta),\gamma \rangle.
\] 
  Since both sides are defined and analytic in $\mathcal Z_p$,
  it follows that the identity is valid  for $(\zeta,\eta)\in\tV$
  too. In particular, if also $\gamma^\prime \in\mathbb C^\alpha$, then
\[
  \langle \mQ(\cdot,(\zeta^\prime,\eta^\prime))\gamma^\prime,
    \mQ(\cdot,(\zeta,\eta))\gamma\rangle
     = \mQ((\zeta,\eta),(\zeta^\prime,\eta^\prime))\gamma^\prime,
        \gamma\rangle,
\]
  for $(\zeta,\eta), (\zeta^\prime,\eta^\prime)\in \tV$.

  By analogy with $\mQ$ and $\cQ$, let  
   $\tmQ((z,w),(\zeta,\eta))=Q(z,w)Q(\zeta,\eta)^*$
   for $(z,w),(\zeta,\eta)\in \tV$ and $\tcQ$ denote
  the span of $\{\tmQ(\cdot,(\zeta,\eta))\gamma: (\zeta,\eta)\in\tV, \
  \gamma\in\mathbb C^\alpha\}$. 
  
 Define
 $\Sigma,\Gamma :H^2(\tK)\to  H^2(K)$ by,
\begin{equation*}
 \begin{split}
  \Sigma \tK(\cdot,(\zeta,\eta)) = & \frac{1}{\overline{\zeta}}
            Q(\cdot)Q(\zeta,\eta)^* \\
  \Gamma \tK(\cdot,(\zeta,\eta)) = &
     \frac{1}{\overline{\eta}} P(\cdot)P(\zeta,\eta)^*.
 \end{split}
\end{equation*}

  Of course at this point $\Sigma$ and $\Gamma$ are only 
  densely defined.  The computations below show that
  $\Sigma^* \Sigma=P_{\tcQ}$, and similarly
  $\Gamma^*\Gamma,$ is a projection.

  Note that the functions on the left hand side
  are defined on $\tV$ and those on the right are
  defined on $\V$.

With these definitions of $\Sigma$ and $\Gamma$, the operators $X,Y$
on $H^2(K)\oplus H^2(\tK)$ from Theorem
\ref{thm:dilation-with-relations} are given by
\[
X = \begin{pmatrix} S & \Sigma \\ 0 & \tS^* \end{pmatrix} \qquad Y
= \begin{pmatrix} T & \Gamma \\ 0 & \tT^* \end{pmatrix} 
\]
and it is now our task to prove $X$ and $Y$ are commuting unitaries
satisfying $p(X,Y) = 0$.

 Compute, for $(\zeta,\eta),(\zeta^\prime,\eta^\prime)\in \tV$
  and $\gamma,\gamma^\prime\in\mathbb C^\alpha$, 
\[
 \begin{split}
  \langle \Sigma^* \Sigma \tK(\cdot,(\zeta^\prime,\eta^\prime))\gamma^\prime, 
    \tK(\cdot,(\zeta,\eta))\gamma \rangle
   = &\frac{1}{\zeta^\prime\overline{\zeta}}
        \langle \tmQ((\zeta,\eta),(\zeta^\prime,\eta^\prime))\gamma^\prime,
        \gamma \rangle \\
   =&  \langle P_{\tcQ} \tK(\cdot,((\zeta^\prime,\eta^\prime))\gamma^\prime,
            \tK(\cdot,(\zeta,\eta))\gamma \rangle.
 \end{split}
\]
  Thus, $\Sigma^* \Sigma=P_{\tcQ}.$ Hence, by Proposition
  \ref{prop:kerS-star}, $\Sigma^*\Sigma$ is the projection onto the
  kernel of $\tS^*$ and in particular
 \begin{equation}
   \label{eq:X1}
       I=\tS \tS^* +\Sigma^* \Sigma
 \end{equation}
  
   Since the range of $\Sigma$ is in $\cQ$, 
\begin{equation}
 \label{eq:X2}
   S^* \Sigma =0.
\end{equation}

  Using equations \eqref{eq:X1} and \eqref{eq:X2} and the
  fact that $S$ is an isometry,  it
  follows that the  $X$ in Theorem \ref{thm:dilation-with-relations}
  is an isometry; i.e., $X^*X=I$.

  Since $\Sigma\Sigma^*$ is a projection of rank $m\alpha$
  (same as the rank of $\Sigma^* \Sigma$)
  with range in the kernel of $S^*$, we conclude 
  $SS^*+\Sigma\Sigma^*=I$. Since also $\tS^*\tS=I$
  and $XX^* \le I$, it follows that $XX^*=I$.
  Hence $X$ is  unitary. A similar argument shows
  $Y$ is unitary.

  The commutation relation $XY=YX$ is equivalent to
\[
  S\Gamma - \Gamma \tS^* = T\Sigma -\Sigma \tT^*.
\]
  To see that this is indeed the case, compute,
\[
 \begin{split}
  \langle [S\Gamma -&\Gamma \tS^*]\tK(\cdot,(\zeta,\eta)\gamma,
             \tK(\cdot,(z,w))\delta  \rangle \\
          =& ((\frac{z}{\overline{\eta}}
   -\frac{1}{\overline{\zeta \eta}}) P(z,w)P(\zeta,\eta))^*\gamma,
              \delta \rangle \\
    = & \langle (\frac{z\overline{\zeta}-1}{\overline{\zeta\eta}} 
         P(z,w)P(\zeta,\eta)^* \gamma, \delta \rangle.
 \end{split}
\]
  Similarly,
\[
 \begin{split}
   \langle [T\Sigma -&\Sigma \tT^*]\tK(\cdot,(\zeta,\eta)\gamma,
             \tK(\cdot,(z,w))\delta  \rangle \\
   = & \langle (\frac{w\overline{\eta}-1}{\overline{\zeta\eta}})
        Q(z,w)Q(\zeta,\eta)^*\gamma,\delta \rangle.
 \end{split}
\]
  The commutation relation thus follows
  as a consequence of the fact that $(P,Q)$ is an admissible pair.
 
  For the statement about the spectrum, it is a property of the
  Taylor spectrum that, given the upper triangular structure
  of the pair $(X,Y)$ that 
\begin{equation*}
  \sigma_T(X,Y) \subset \sigma_T(S,T) \cup \sigma_T(\tS^*,\tT^*).
\end{equation*}
  The sets on the right hand  side both lie in $\mbox{closure}(\V)$.
  On the other hand, the projection property of the Taylor
  spectrum implies,
\begin{equation*}
 \sigma_T(X,Y) \subset \sigma(X) \times \sigma(Y) \subset \mathbb T\times \mathbb T.
\end{equation*}
 Putting the last two inclusions together it follows that
 $\sigma_T(X,Y) \subset \partial \V$.

\section{Polynomial approximation on $\V$}\label{sec:approx}

This section proves the fundamental and function-theoretic Theorem
\ref{thm:poly-approx}. It is largely independent from other
sections.
 
Suppose $p \in \mathbb{C}[z,w]$ defines a distinguished variety $\V= \mathcal Z_p
\cap \mathbb D^2$, where $\mathcal Z_p$ is the zero set of $p$.  Let $R$ be the
Riemann surface desingularizing $\mathcal Z_p$, with map $h:R\to \mathcal Z_p$.  Let
$S\subset R$ be the bordered Riemann surface $h^{-1}(V)$, so $h:S\to
V$ is a {\em holomap} in the sense of \cite{AM07}.  If $W$ is any
surface or variety, write $O(W)$ for the holomorphic functions on $W$.
(In particular, we recall that to say $f$ is holomorphic at $(z,w)\in
\V$ means ``$f$ extends to be holomorphic in a neighborhood of $(z,w)$
in $\mathbb C^2$.'')  If $W$ is a surface or variety with (always
assumed smooth) boundary $\partial W$, then $A(W)$ denotes those
functions continuous on $\overline W= W\cup \partial W$ and
holomorphic on $W$.  Finally, $H^\infty(W)$ denotes the algebra of
bounded analytic functions on $W$.  We remark that if $W$ is a Riemann
surface with smooth boundary, and $\omega$ is harmonic measure on
$\partial W$, then $H^\infty(W)$ coincides with $H^\infty(\omega)$ as
defined in the theory of uniform algebras (as the weak-* closure of
$A(W)$ in $L^\infty(\omega)$).  This precise result is found in
\cite{Gamelin-Lumer} Theorem 3.10, page 171.


We recall some terminology and a theorem from \cite{AM07}.  
\begin{definition}If $S$ is a bordered Riemann surface, a linear functional on $O(S)$ is called {\em local} if it comes from a finitely supported distribution, i.e. has the form
\begin{equation*}
\Lambda(f)=\sum_{i=1}^m\sum_{j=0}^n c_{ij} f^{(j)}(\alpha_i).
\end{equation*}  
It is assumed that for each $i$, some $c_{ij}\neq 0$.  The set $\{\alpha_1,\dots \alpha_m\}$ is then called the {\em support} of $\Lambda$.  

A {\em connection} $\Gamma$ supported in $\{\alpha_1,\dots \alpha_m\}$ is a finite set of local functionals $\Lambda$ supported in $\{\alpha_1,\dots \alpha_m\}$.  Write $\displaystyle{\Gamma^\bot=\bigcap_{\Lambda\in \Gamma} {\rm ker}\Lambda}$.  Say $\Gamma$ is {\em algebraic} if $\Gamma^\bot $ is an algebra, and {\em irreducible} if every $f\in \Gamma^\bot$ is constant on the support of $\Gamma$.  
\end{definition}
A theorem of Gamelin \cite{G2} says that the finite codimension subalgebras of $O(S)$ are exactly the $\Gamma^\bot$'s for algebraic connections $\Gamma$.  Moreover, each connection is the union of finitely many irreducible connections with disjoint supports.  Finally, each finite codimension subalgebra $A\subset O(S)$ has a filtration $A_n\subsetneq A_{n-1}\cdots \subsetneq A_1=O(S)$ where each $A_j$ has codimension $1$ in the next and $A_{j+1}$ is obtained either as the kernel of a point derivation on $A_j$ or by identifying two points of the maximal ideal space of $A_j$. 

The main step in our proof will be an appeal to the following, which
is Theorem 2.8(i) of \cite{AM07}.  For us, $V$ will always be the
intersection of $\mathcal Z_p$ with a bidisk $U$ centered at $(0,0)$ (of some
radius) and $S$ will always be the piece of the disingularization
living over $V$. Note that an algebraic curve intersected with a
bounded domain in $\mathbb{C}^n$ is what is called a \emph{hyperbolic
  algebraic curve} and this is a special case of the hyperbolic
analytic curves defined in \cite{AM07}.
 
\begin{theorem}\label{thm:cofinite}
If $h:S\to V\subset U$ is a holomap from a Riemann surface $S$ onto a hyperbolic analytic curve $V\subset U$, then 
\[
A_h := \{F\circ h: F\in O(V)\}
\]
is a finite codimension subalgebra of $O(S)$.  
\end{theorem}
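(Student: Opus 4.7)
The plan is to reduce the assertion to a local finiteness statement at the singular points of $V$ and then assemble the resulting local constraints into an algebraic connection in the sense of Gamelin. Because $V$ is the intersection with a bounded open set $U$ of the zero set of a squarefree polynomial $p$, its singular locus --- the simultaneous zeros of $p$, $\partial p/\partial z$, $\partial p/\partial w$ --- is a finite set $\{q_1,\ldots,q_N\}\subset V$ by Bezout. At every smooth point of $V$ the holomap $h$ restricts to a local biholomorphism by the defining property of the desingularization, so germs of holomorphic functions on $S$ at such points lift to elements of $O(V)$ with no restriction, and the content of the theorem is entirely local at the $q_i$.

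For each singular point $q_i$, let $h^{-1}(q_i)=\{s_{i,1},\ldots,s_{i,k_i}\}\subset S$, a finite set. Pullback by $h$ gives an injection
\begin{equation*}
 h^\ast : \mathcal O_{V,q_i} \hookrightarrow \bigoplus_{j=1}^{k_i}\mathcal O_{S,s_{i,j}},
\end{equation*}
where $\mathcal O_{V,q_i}$ denotes germs at $q_i$ of functions that extend to be holomorphic on a neighborhood of $q_i$ in $\mathbb C^2$. The crucial local input, which I will assume and then discuss, is that this injection has finite cokernel of some dimension $\delta_i$. Granting this, there are $\delta_i$ complex-linear functionals $\Lambda_{i,1},\ldots,\Lambda_{i,\delta_i}$ on the right-hand side whose common kernel equals the image of $h^\ast$; each $\Lambda_{i,\ell}$ is a finite linear combination of point evaluations and point derivations at $\{s_{i,1},\ldots,s_{i,k_i}\}$, i.e.\ a local functional with support $h^{-1}(q_i)$. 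The collection
\begin{equation*}
  \Gamma \;=\; \bigcup_{i=1}^{N}\{\Lambda_{i,\ell}:1\le\ell\le\delta_i\}
\end{equation*}
is then a finite family of local functionals, and tracing through the definitions gives $A_h=\Gamma^\bot$. Since $A_h$ is obviously closed under multiplication, $\Gamma$ is algebraic in Gamelin's sense, and $A_h$ has codimension $\sum_i\delta_i<\infty$ in $O(S)$.

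The main obstacle is the local finiteness statement, i.e.\ that the conductor of the normalization of $V$ at each singular point is nontrivial. In the irreducible case, parameterize a branch by a Puiseux map $t\mapsto(t^m,g(t))$ and compute $F(t^m,g(t))$ for $F\in\mathcal O_{V,q_i}$: the resulting subspace of $\mathbb C\{t\}$ is closed under multiplication and contains $t^\ell$ for all sufficiently large $\ell$ (its semigroup of exponents eventually hits every integer past some threshold), yielding finite codimension. For a reducible germ one decomposes into branches, applies the irreducible case to each, and adds in the finite-dimensional branch-identification constraints forced by the requirement that $F$ extend holomorphically to a $\mathbb C^2$ neighborhood of $q_i$ (so that its values at each branch must agree). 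This is the classical finiteness of the $\delta$-invariant for an isolated plane curve singularity, and it is the only step of the argument that draws on anything beyond formal manipulations with reproducing kernel Hilbert spaces or uniform algebras.
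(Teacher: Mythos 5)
This theorem is not proved in the paper: it is quoted verbatim as Theorem~2.8(i) of Agler--McCarthy \cite{AM07} and used as a black box in Section~\ref{sec:approx}. Your reconstruction is sound and is essentially the argument one would expect. You localize at the finitely many singular points (correct, since the normalization $h$ is a biholomorphism over the smooth locus, so membership in $A_h$ is entirely a local condition there), invoke at each such point the finiteness of the $\delta$-invariant of the planar curve germ --- equivalently the nonvanishing of the conductor of $\mathcal O_{V,q_i}$ in its normalization $\bigoplus_j\mathcal O_{S,s_{i,j}}$ --- and then package the resulting finite-jet conditions as an algebraic connection in Gamelin's sense. Two things deserve tightening. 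First, you invoke Bezout to get finiteness of the singular locus, which requires $V$ algebraic; that is all the paper actually uses, but the theorem as stated is for hyperbolic \emph{analytic} curves, where one should instead observe that the singular set is discrete in a compactly contained curve and hence finite. Second, your handling of a reducible germ (``adds in the finite-dimensional branch-identification constraints'') is underspecified: the clean way to get both finiteness and the locality of the cutting-out functionals is to note that the conductor is a nonzero ideal of the normalization, so it contains a power of the maximal ideal, and therefore $\bigoplus_j\mathcal O_{S,s_{i,j}}\big/h^*\mathcal O_{V,q_i}$ factors through a fixed finite jet space. Your numerical-semigroup argument proves this for a single Puiseux branch but does not by itself control the transversal relations among distinct branches; the conductor formulation handles both at once.
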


We can now prove the approximation theorem.

\begin{proof}[Proof of Theorem~\ref{thm:poly-approx}]  
We allow that $\V$ may have singularities on $\mathbb T^2$.  First we
extend $\V$ slightly: choose $r>1$ so that $\V_r:=\mathcal Z_p\cap r\mathbb
D^2$ has no additional singularities.  Let $S_r$ be the piece of the
desingularization lying over $\V_r$.  Then $S_r$ is also a bordered
Riemann surface, and $\overline {S}$ is compactly contained in $S_r$.
From the theory of hypo-Dirichlet algebras \cite{G1}, every function
in $H^\infty(S)$ can be contractively locally uniformly approximated
on $S$ by functions in $O(S_r)$.  (In particular, from \cite[Theorem
  IV.8.1]{G1} every function in $A(S)$ can be uniformly approximated
on $\overline{S}$ by functions in $O(S_r)$, and from \cite[Theorem
  VI.5.2]{G1} each $f\in H^\infty(S)$ can be approximated pointwise on
$S$ (and hence locally uniformly) with functions $f_n\in A(S)$,
satisfying $\|f_n\|_S\leq \|f\|_S$.)

Fix the function $f\in H^\infty(\V)$ that we would like to approximate
with polynomials. We may assume $\|f\|_\infty=1$.  Then $f\circ h$
belongs to $H^\infty(S)$, and so $f\circ h$ is approximated on
$\overline S$ by functions which extend to be holomorphic on $S_r$.
On the other hand, let $O_h(S_r)$ denote the subalgebra of functions
$\{F\circ h:F\in O(\V_r)\}$; by Theorem~\ref{thm:cofinite}, this is a
finite codimension subalgebra of $O(S_r)$, and hence by Gamelin's theorem is of
the form $\Gamma^\bot $ for some connection $\Gamma$ on $S_r$.  The
idea of the proof is to ``correct'' the approximants from $O(S_r)$ so
that they belong to $O_h(S_r)$.  It then follows from
Theorem~\ref{thm:cofinite} that the corrected approximants can be
pushed down to holomorphic functions on $\V_r$.  This process is
straightforward for the portion of $\Gamma$ supported in the interior
of $S$, but when the support of $\Gamma$ meets $\partial S$, it seems
that some care is needed (this is the case when $\V$ has singularities
on its boundary in $\mathbb T^2$).  For the Neil parabola and the annulus
 discussed in Section \ref{sec:examples} there are no singularities on
 the boundary which explains why it is possible to give simple 
 proofs that $H^\infty_{\mathcal K}$ and $H^\infty$ are isometric
 in these cases.  On the other hand, when a triply connected domain
 is realized as a distinguished variety, there are singularities on the
  boundary \cite{Ru}. 

Consider a sequence $(q_n)\subset O(S_r)$ converging uniformly to
$f\circ h$ on compact subsets of $S$, with each $q_n$ bounded by $1$
on $S$.  By Gamelin's theorem, $O_h(S_r)=\Gamma^\bot$ for some
algebraic connection $\Gamma$.  Since $\V$ meets the boundary of
$\mathbb D^2$ only in $\mathbb T^2$, it follows that each irreducible
component of $\Gamma$ is supported either entirely in the interior of
$S$ or entirely in the boundary of $S$ (points in the interior of $S$
cannot be identified with points in the boundary of $S$ when we push
forward to $\V$).  Decompose $\Gamma=\Gamma_1\cup \Gamma_2$ into its
interior and boundary pieces.  We first correct the $q_n$ to lie in
$\Gamma_1^\bot$, then correct these functions to lie in
$\Gamma_2^\bot$ as well.

Let 
\[
\Gamma_1^\bot := A_m\subsetneq A_{m-1} \cdots \subsetneq A_1=O(S_r)
\]
be a Gamelin filtration.  We show by induction that for each
$k=1,\dots m$ there exists a sequence $(q_n^k)\subset A_k$
approximating $f$ in the required way.  We already have $q_n^1=q_n$.
Suppose $(q_n^k)$ is given.  Now $A_{k+1}$ is obtained from $A_k$ as
$A_{k+1}=\ker \gamma_{k+1}$, where $\gamma_{k+1}$ is either a point
derivation or identifies two points.  In either case, choose $a_k\in
A_k$ such that $\gamma_{k+1}(a_k)=1$.  Define
\[
q_n^{k+1} = q_n^k -\gamma_{k+1}(q_n^k)a_k.
\]
By construction, $q_n^{k+1}$ lies in $A_{k+1}$ and converges locally
uniformly to $f$; since $\gamma_{k+1}(q_n^k)\to \gamma_{k+1}(f)=0$,
the sup norms of the $q_n^{k+1}$ converge to $1$, so after
normalization the $q_n^{k+1}$ work.



To accomplish the modification on the boundary, we multiply the
functions $q_n$ by functions $G_n$ that converge to $1$ pointwise in
$S$ and ``zero out'' the boundary relations.  The $G_n$ are
constructed using two lemmas:

\begin{lemma}
Let $S, \V, \Gamma$ be as above.  Let $\alpha_1, \dots \alpha_m$ be
the interior points of $S$ belonging to the support of $\Gamma$, let
$\beta_1, \dots \beta_l$ be the boundary points in the support of
$\Gamma$, and let an integer $N\geq 1$ be given.  Then there exists a
function $b$, holomorphic in a neighborhood of $\overline{S}$, such
that
\begin{itemize}
\item[i)] $b$ is inner (that is, $|b|=1$ on $\partial S$),
\item[ii)]  $b$ vanishes to order $N$ at each $\alpha_j$, and
\item[iii)] $b$ is $1$ at each $\beta_j$.
\end{itemize}
\end{lemma}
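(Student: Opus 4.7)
The plan is a two-step construction: first produce an analytic inner function on $S$ realizing the required zero structure, then correct the boundary values by multiplication with a nowhere-vanishing analytic inner function.

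For the first step, I would pass to the Schottky double $\hat S$ of a neighborhood of $\overline S$. Since $\partial S$ is smooth analytic, $\hat S$ is a compact Riemann surface equipped with an antiholomorphic involution $\sigma$ whose fixed point set is $\partial S$; by Schwarz reflection, analytic functions on $S$ that are unimodular on $\partial S$ extend to meromorphic functions on $\hat S$ whose zero divisor on $S$ is mirrored by a pole divisor on $\sigma(S)$. I would therefore seek a meromorphic $b_0$ on $\hat S$ with $\mathrm{div}(b_0) \geq N\sum_j \alpha_j - N\sum_j \sigma(\alpha_j)$. Since ``vanishes to order $N$'' permits any order $\geq N$, the order can be raised to any sufficiently large multiple, at which point Riemann--Roch on $\hat S$ gives a nontrivial such function; extracting its $\sigma$-symmetric part and rescaling by a unimodular constant yields a function satisfying (i) and (ii) but with as yet unspecified boundary values $b_0(\beta_j)=e^{i\theta_j}$.

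For the second step, let $\mathcal G$ denote the multiplicative group of analytic inner functions on $S$ nonvanishing on $\overline S$, and consider the evaluation homomorphism $E\colon \mathcal G \to \mathbb T^l$ sending $u$ to $(u(\beta_1),\dots,u(\beta_l))$. The image of $E$ contains the diagonal $\mathbb T$ via unimodular constants. I would build further elements of $\mathcal G$ as auxiliary Blaschke-type factors whose zeros vary continuously over interior points of $S$ avoiding the $\alpha_j$'s; the boundary values at each $\beta_j$ move real-analytically in these parameters, and a generic selection of several parameters should yield tangent directions in the image of $E$ spanning $\mathbb T^l$, so the image contains a neighborhood of the identity. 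Any subgroup of $\mathbb T^l$ containing a neighborhood of the identity is all of $\mathbb T^l$, so I can choose $u \in \mathcal G$ with $u(\beta_j) = e^{-i\theta_j}$ for each $j$. Then $b := b_0 u$ satisfies all three required properties.

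The main obstacle I anticipate is verifying the surjectivity of $E$: one must show that the phases at the $l$ distinct boundary points can be prescribed independently by inner-function factors. This reduces to a non-degeneracy statement about the harmonic-measure periods on $\partial S$ associated to different interior zeros, which should hold because $\partial S$ is analytic and the $\beta_j$ are distinct, but making this rank argument rigorous in the presence of boundary singularities of $\V$ (now smoothed out on $S$ but still reflected in the geometry of $\hat S$) is the delicate step.
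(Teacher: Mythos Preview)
Your two-step plan is far more elaborate than the situation requires, and the second step has a real gap.

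The paper exploits a piece of structure you overlooked: $S$ already comes equipped with a holomorphic map to the closed disk, namely the first coordinate $h_1$ of the desingularizing map $h=(h_1,h_2)\colon S_r\to \V_r\subset r\mathbb D^2$.  Because $\V$ is distinguished, $h_1$ sends the interior of $S$ into $\mathbb D$ and $\partial S$ into $\mathbb T$.  Hence one only needs a finite Blaschke product $B$ on the disk vanishing to order $N$ at each point $h_1(\alpha_j)$ and equal to $1$ at each circle point $h_1(\beta_k)$; setting $b=B\circ h_1$ (after shrinking $r$ so the composition is defined on a neighborhood of $\overline S$) finishes the proof.  No Schottky double, no Riemann--Roch, no phase-correction argument is needed.

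Your second step, as written, cannot work.  You define $\mathcal G$ to be the group of inner functions on $S$ that are \emph{nonvanishing} on $\overline S$, but any such $u$ has $\log|u|$ harmonic on $S$ and identically zero on $\partial S$, hence $|u|\equiv 1$ on $S$, so $u$ is a unimodular constant.  Thus $\mathcal G=\mathbb T$, the image of your evaluation map $E$ is only the diagonal in $\mathbb T^l$, and for $l\ge 2$ independent phase correction is impossible.  The ``Blaschke-type factors whose zeros vary over interior points'' that you propose have zeros and so do not belong to $\mathcal G$ at all.  The repair is to drop the nonvanishing requirement and allow $u$ to carry extra zeros (the lemma permits this, since it only asks for vanishing \emph{at least} to order $N$); but once you allow that, you are essentially carrying out on $S$ the same Blaschke-product interpolation that the paper performs directly on $\mathbb D$ via $h_1$, only without the simplification the coordinate map provides.
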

\begin{proof}
Write $h=(h_1,h_2)$ and consider the projection $h_1:\overline{S_r}\to
\overline {r\mathbb D}$.  It is straightforward to construct a finite
Blaschke product $B$ which vanishes to order $N$ at each of the points
$h_1(\alpha_j)$, and takes the value $1$ at the points $h_1(\beta_j)$
on the unit circle.  By shrinking $r$ if necessary, $b=B\circ h_1$
does the job.

\end{proof}

\begin{lemma}  There exists a sequence of functions $g_n$ in the unit disk such that:
\begin{itemize}
\item[i)]  Each $g_n$ is holomorphic in some neighborhood of $\overline{\mathbb D}$ and bounded by $1$ in $\mathbb D$, 
\item[ii)]  $g_n(1)=0$ for all $n$, and
\item[iii)]  $g_n\to 1$ uniformly on compact subsets of $\mathbb D$.  
\end{itemize}

\end{lemma}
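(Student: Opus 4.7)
The plan is simply to exhibit an explicit rational sequence and verify the three properties directly. Since $g_n(1)=0$ is required as an honest value (not a radial limit), the construction must produce functions with no singularities on $\overline{\mathbb D}$, and the normalization $\|g_n\|_\infty\le 1$ must be maintained while $g_n\to 1$ pointwise on $\mathbb D$. A natural choice that achieves all of this is a one-parameter family of linear fractional transformations.

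I would take
\[
 g_n(z) = \frac{n(1-z)}{n(1-z)+1}.
\]
For (i): $g_n$ is a rational function with its unique pole at $z=1+1/n$, which lies strictly outside $\overline{\mathbb D}$, so $g_n$ is holomorphic in a neighborhood of $\overline{\mathbb D}$. To see $|g_n|\le 1$ in $\mathbb D$, set $w=n(1-z)$ and compute
\[
|w+1|^2 - |w|^2 = 2\operatorname{Re}(w)+1 = 2n(1-\operatorname{Re} z)+1 \ge 1
\]
on $\overline{\mathbb D}$, so $|g_n(z)|^2 = |w|^2/|w+1|^2 \le 1$ there.

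For (ii): direct substitution gives $g_n(1) = 0/1 = 0$. For (iii): fix a compact $K\subset\mathbb D$ and set $r=\max_{z\in K}|z|<1$. Then
\[
|1-g_n(z)| = \frac{1}{|n(1-z)+1|} \le \frac{1}{n(1-r)-1}
\]
for all $n$ large enough that $n(1-r)>1$, and the right side tends to $0$. So $g_n\to 1$ uniformly on $K$.

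There is no real obstacle here; the only thing to keep in mind is that condition (i) rules out constructions with an essential singularity at $z=1$ (for instance $\exp(-(1+z)/(n(1-z)))$ would satisfy everything else but fails to be holomorphic in any neighborhood of $\overline{\mathbb D}$), so one needs a family where the singularity is pushed safely off the closed disk, as above.
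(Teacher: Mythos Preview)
Your proof is correct. The verification of all three properties is clean, and in particular your observation that $|w+1|^2-|w|^2=2n(1-\operatorname{Re}z)+1\ge 1$ on $\overline{\mathbb D}$ gives the bound in (i) without any normalization step.

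Your approach is genuinely different from the paper's, and in fact simpler. The paper builds $h_n(z)=\exp\bigl(-\tfrac{1}{n}\,\tfrac{1+c_nz}{1-c_nz}\bigr)-\exp\bigl(-\tfrac{1}{n}\,\tfrac{1+c_n}{1-c_n}\bigr)$ with $c_n=1-n^{-2}$, and then sets $g_n=h_n/\|h_n\|_\infty$. The $c_n$ factor is there precisely to push the singularity of the M\"obius map off $\overline{\mathbb D}$---exactly the issue you flag in your closing remark---and the subtraction forces the value $0$ at $z=1$; the final normalization then restores the bound $\|g_n\|_\infty\le 1$. Your linear fractional family achieves all of this in one stroke: the pole sits at $1+1/n$, the zero at $1$ is built in, and the contraction property falls out of a direct computation. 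The paper's construction has the mild advantage that the main term is already an inner-type function before correction, but for the purposes of this lemma your rational construction is more economical and requires no asymptotic estimate of $\|h_n\|_\infty$.
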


\begin{proof}[Proof of lemma]  To construct the $g_n$, let $c_n=1-n^{-2}$ and define
\[
h_n(z)= \exp\left( -\frac1n \left(\frac{1+c_nz}{1-c_nz}\right)\right)-\exp\left(-\frac1n \left(\frac{1+c_n}{1-c_n}\right)\right)
\]
It is evident that $h_n$ is holomorphic on $\overline{\mathbb D}$ and
that $h_n(1)=0$ for all $n$.  Moreover it is readily verified that
$\|h_n\|_\infty\leq 1+o(1)$ as $n\to \infty$ and $h_n\to 1$ locally
uniformly in $\mathbb D$.  Taking $g_n=h_n/\|h_n\|_{\infty}$ works.
\end{proof}
Now we combine the two lemmas.  For each $n$, we may shrink the domain
of $b$ further (but so that it still contains $\overline{S}$) so that
$b$ maps into the domain of $g_n$.  We may then form the composition
$g_n\circ b$.  Now, $b$ is bounded by $1$ in $\overline S$, and $g_n$
is given by a uniformly convergent power series on $\mathbb
\overline{D}$, and vanishes at each $\beta_j$.  So by taking a
suitably high power $G_n =(g_n\circ b)^N$, we see that each $G_n$ is
annihilated by $\Gamma$ (it vanishes to high order at the boundary
points, and satisfies the interior relations because $b$ does).  Thus,
if we call $S_n$ the domain of $G_n$, then each $G_n$ belongs to
$O_h(S_n)$.  By construction the $G_n$ are all bounded by $1$ in $S$,
and $G_n\to 1$ pointwise on $S$.

We can now use the $G_n$ to correct the sequence $q_n$ converging to
$f\circ h$.  In particular, by construction the product $G_nq_n$
belongs to $O_h(S_n)$, since the relations on the boundary are
zeroed out by the $G_n$.  Setting $\V_n=h(S_n)$, from
Theorem~\ref{thm:cofinite} there is an analytic function $p_n$ on
$\V_n$ satisfying $p_n\circ h=G_nq_n$.  So the $p_n$ are each
holomorphic on a neighborhood of $\V$ in $\mathbb C^2$, bounded by $1$
on $\V$, and converge to $f$ uniformly on compact subsets of $\V$.
Finally, since $\overline{\V}$ is polynomially convex, the Oka-Weil
theorem says that each $p_n$ is uniformly approximable on
$\overline{\V}$ by polynomials, and thus $f$ is approximable by
polynomials as desired.
\end{proof}

\section{Bounded Analytic Functions on $\V$}
 \label{sec:analytic-functions}
In this section we prove Corollary~\ref{cor:isometry-of-algebras}. By
Corollary~\ref{cor:polys_are_multipliers}, every polynomial belongs to
$H^\infty_{\mathcal K}(\V)$, with norm equal to the supremum norm over
$\V$.  The first step is an elementary completeness result for $H^\infty_{\cK}(\V)$.
\begin{proposition}
  \label{props-of-H-infty}
    The algebra $H^\infty_{\mathcal K}(\V)$ is closed both in norm
    and under pointwise bounded convergence.
\end{proposition}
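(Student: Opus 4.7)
The plan is to exploit two elementary facts: the inequality $\|f\|_\infty \le \|f\|_\V$ already recorded in the text, and the fact that positive semidefiniteness of matrices is preserved under entrywise limits. Both closure statements will follow by passing to pointwise limits in the defining inequality
\[
   (M^2 - f(z,w)\overline{f(\zeta,\eta)}) K((z,w),(\zeta,\eta)) \succeq 0
\]
tested one finite subset of $\V$ at a time, so that the relevant matrix entries converge automatically and one may invoke closedness of the PSD cone.

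For norm closure, let $(f_n)$ be Cauchy in $H^\infty_{\mathcal K}(\V)$. Since the $\V$-norm dominates the supremum norm, $(f_n)$ is uniformly Cauchy on $\V$ and converges uniformly to some bounded function $f$. Given $\epsilon > 0$, choose $N$ with $\|f_n - f_m\|_\V \le \epsilon$ for $n, m \ge N$. For any admissible $K$ and any finite subset $F \subset \V$ the matrix
\[
  \bigl[ (\epsilon^2 - (f_n - f_m)(u)\overline{(f_n - f_m)(v)}) K(u,v) \bigr]_{u,v \in F}
\]
is positive semidefinite, and letting $m \to \infty$ keeps it so (the entries converge by uniform convergence of the $f_m$). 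Since $F$ and $K$ were arbitrary, this yields $f_n - f \in H^\infty_{\mathcal K}(\V)$ with $\|f_n - f\|_\V \le \epsilon$. Hence $f = f_n - (f_n - f) \in H^\infty_{\mathcal K}(\V)$ and $f_n \to f$ in the $\V$-norm.

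For closure under pointwise bounded convergence, suppose $M := \sup_n \|f_n\|_\V < \infty$ and $f_n \to f$ pointwise on $\V$. For each admissible $K$, each finite $F \subset \V$, and each $n$, the matrix built from $M^2 - f_n(u)\overline{f_n(v)}$ over $F$ is positive semidefinite; because $F$ is finite, letting $n \to \infty$ forces the same inequality for $f$, showing $f \in H^\infty_{\mathcal K}(\V)$ with $\|f\|_\V \le M$.

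I do not anticipate any real obstacle. The only point requiring a modicum of care is insisting on testing the defining PSD inequality on a finite set at a time, so that one is always working with convergent matrix entries rather than having to argue about operator-norm limits of multiplication operators on different $H^2(K)$ spaces.
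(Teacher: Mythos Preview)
Your proof is correct and follows essentially the same approach as the paper: both arguments rest on the fact that $\|\cdot\|_{\V}$ dominates $\|\cdot\|_\infty$ and that positive semidefiniteness is preserved under entrywise limits on finite test sets. The only cosmetic difference is that the paper proves pointwise bounded closure first and then invokes it in the norm-closure step, whereas you handle norm closure directly by passing to the limit $m\to\infty$ in the PSD inequality for $f_n-f_m$; either route works.
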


  Since the result is standard (see for instance  
  \cite{AMbook}), we only sketch a proof.

\begin{proof}
  Let $(f_n)$ be a given sequence from $H^\infty_{\mathcal K}(\V)$ and suppose
  there is a $C$ such that $\|f_n\|\le C$ independent of $n$. Further,
  assume $f_n$ converges pointwise on $\V$.  It follows that for every
  finite subset $F$ of $\V$, every admissible kernel $K$ and every
  $n$, the (block) matrix
 \[
  \begin{pmatrix} (C^2-f_n(x)\overline{f_n(y)})K(x,y)\end{pmatrix}_{x,y\in F}
 \] 
   is positive semidefinite.  Thus, 
 \[
   \begin{pmatrix} (C^2-f(x)\overline{f(x)})K(x,y) \end{pmatrix}_{x,y\in F}
 \] 
  is positive semi-definite and hence $f\in H^\infty_{\mathcal K}(\V)$.

  Now suppose $(f_n)$ is Cauchy in $H^\infty_{\mathcal K}(\V)$. 
  Since $\|f\|_{\V}$ dominates $\|f\|_\infty$, the 
  sequence converges pointwise to some $f$. It
  follows that $f\in H^\infty_{\mathcal K}(\V)$ and 
  moreover $\|f\|\le C$.  It remains to
  verify that $(f_n)$ converges to $f$ in $H^\infty_{\mathcal K}(\V)$.

  Let $\epsilon >0$ be given.  There is an $N$ so that 
  if $m,n\ge N$, then $\|f_m-f_n\|_{\V} <\epsilon$.
  From what has already been proved, it now follows that
 \[
   \|f-f_n\|_{\V}\le \epsilon.
 \]
\end{proof}


\begin{proof}[Proof of Corollary~\ref{cor:isometry-of-algebras}]  
Suppose $f\in H^\infty(\V)$.  Then by Theorem~\ref{thm:poly-approx}
there exist polynomials $p_n\to f$ pointwise with $\|p_n\|_\infty\leq
\|f\|_\infty$.  By Corollary~\ref{cor:polys_are_multipliers}, each
$p_n$ belongs to $H^\infty_{\mathcal K}(\V)$, and
$\|p_n\|_\infty=\|p_n\|_{\V}$.  It follows that $f\in
H^\infty_{\mathcal K}(\V)$ and $\|f\|_{\V}\leq \|f\|_\infty$ by
Proposition~\ref{props-of-H-infty}.

We now turn to the proof that each function in $H^\infty_{\mathcal
  K}(\V)$ is analytic on $\V$.  It is proved in \cite{Kn2} (Theorem
11.3) that every distinguished variety has an admissible kernel
\[
K((z,w),(\zeta,\eta)) = \frac{Q(z,w)Q(\zeta,\eta)^*}{1-z\zetastar}
\]
where $K((z,w),(z,w)) \ne 0$ for all $(z,w) \in \V$.  Indeed, it is
shown that $Q$ can be chosen to be of the form 
\[
(1,w,\dots, w^{m-1}) A(z)
\]
where $A(z)$ is an $m\times m$ matrix polynomial which is invertible
for every $z$ in $\mathbb{D}$.  Let $f$ belong to the unit ball of $H^\infty_{\mathcal K}(\V)$.
Then the kernel
\begin{equation*}
\left(1-f(z,w)\overline{f(\zeta,\eta)}\right) 
  \frac{Q(z,w)Q(\zeta, \eta)^*}{1-z\overline{\zeta}}
\end{equation*}
 is positive, and hence there exists a (vector-valued) function $\Gamma$ 
 on $\V$ such that 
\begin{equation*}
(1-f(z,w)\overline{f(\zeta, \eta)})Q(z,w)Q(\zeta,\eta)^* = \Gamma(z,w)(1-z\overline{\zeta}) \Gamma(\zeta,\eta)^*.
\end{equation*}
 A straightforward lurking isometry argument
 produces a contractive $m\times m$ $H^\infty(\mathbb{D})$ matrix 
 function $F$ such that
\begin{equation}\label{eqn:F_eigen}
  F(z)^* Q(z,w)^*  = \overline{f(z,w)} Q(z,w)^*
\end{equation}
  for all $(z,w)\in \V$.  Since $K$ (hence $Q$) does not vanish at
 $(z_0, w_0)$, some coordinate of $Q$ doesn't vanish in a neighborhood
 of $(z_0,w_0)$, say $q_j$.  Writing out the $j^{th}$ coordinate of
 (\ref{eqn:F_eigen}) and taking conjugates gives
\begin{equation*}
  f(z,w) = \frac{\sum_{i=1}^m q_i(z,w) F_{ij}(z)}{q_j(z,w)}.
\end{equation*}
 The right-hand side extends to be analytic in a neighborhood of
 $(z_0, w_0)$ in $\mathbb D^2$, hence $f$ is holomorphic (as a
 function on $\V$) at $(z_0, w_0)$.  Finally, as already noted, the
 inequality $\|f\|_{\infty}\leq\|f\|_{\V}$ is trivial.
\end{proof}





\end{document}